\theoremstyle{plain}
\newtheorem{theorem}{Theorem}[section]\newtheorem{Theorem}{Theorem}[section]
\newtheorem{corollary}[theorem]{Corollary}\newtheorem{Corollary}[theorem]{Corollary}
\newtheorem{prop}[theorem]{Proposition}\newtheorem{Proposition}[theorem]{Proposition}
\newtheorem{Lemma}[theorem]{Lemma}
\theoremstyle{definition}
\newtheorem{defn}[theorem]{Definition}\newtheorem{Definition}[theorem]{Definition}
\newtheorem{remark}[theorem]{Remark}
\newtheorem{Example}[theorem]{Example}
   \newcommand{\ov}{\overline}
\newcommand{\thmref}[1]{Theorem~\ref{#1}}
\newcommand{\secref}[1]{Section~\ref{#1}}
\newcommand{\lemref}[1]{Lemma~\ref{#1}}
\newcommand{\corref}[1]{Corollary~\ref{#1}}
\newcommand{\remarkref}[1]{Remark~\ref{#1}}
\newcommand{\propref}[1]{Proposition~\ref{#1}}
\DeclareMathOperator{\PSL}{PSL} 
\DeclareMathOperator{\PGL}{PGL} \DeclareMathOperator{\GL}{GL}
\DeclareMathOperator{\tr}{tr} 
\DeclareMathOperator{\Tor}{Tor}
\DeclareMathOperator{\TD}{TD}
\DeclareMathOperator{\mult}{mult}\DeclareMathOperator{\rk}{rk}
\DeclareMathOperator{\invl}{inv}
\DeclareMathOperator{\codim}{codim} 
\DeclareMathOperator{\Aut}{Aut} 
\DeclareMathOperator{\Bim}{Bim} 
\DeclareMathOperator{\NS}{NS} 
\DeclareMathOperator{\Bir}{Bir} \DeclareMathOperator{\Pic}{Pic} 
\newcommand{\BC}{\mathbb{C}}
\newcommand{\BP}{\mathbb{P}}
\newcommand{\BQ}{\mathbb{Q}}
\newcommand{\BA}{\mathbb{A}}\newcommand{\BR}{\mathbb{R}}
\newcommand{\al}{\alpha}
\newcommand{\be}{\beta}
\DeclareMathSymbol{\twoheadrightarrow}  {\mathrel}{AMSa}{"10}
               \def\LL{{\mathcal L}} 
     \def\AA{{\mathcal A}}
\def\TT{{\mathbf T}}
\begin{document}

\title[Bimeromorphic automorphisms  of  $\BP^1-$bundles] {Bimeromorphic automorphism groups  of certain  $\BP^1-$bundles}
\author{Tatiana Bandman }\address{Department of Mathematics,
Bar-Ilan University,
Ramat Gan, 5290002, Israel}
\email{bandman@macs.biu.ac.il}
\author{Yuri  G. Zarhin}
\address{Pennsylvania State University, Department of Mathematics, University Park, PA 16802, USA}
\email{zarhin@math.psu.edu}
\thanks{The second named author (Y.Z.) was partially supported by Simons Foundation Collaboration grant   \# 585711.}

\begin{abstract}

We call a group   $G$  very Jordan if it contains a normal abelian subgroup  $G_0$  such that the orders of  finite subgroups of the quotient  $G/G_0$ are bounded by a constant depending on $G$ only.  Let $Y$ be a complex torus   of algebraic dimension 0.
We prove that  
if $X$ is a non-trivial holomorphic $\BP^1-$bundle  over $Y$  then the group $\Bim(X)$  of its bimeromorphic   automorphisms is very Jordan
(contrary  to the case when $Y$  has positive algebraic dimension).  This assertion remains true if $Y$ is any connected compact complex  K\"{a}hler    manifold of algebraic dimension 0  without rational curves or analytic subsets of codimension 1.

\end{abstract}

\subjclass[2010]{14E05, 14E07,14J50, 32L05, 32M05, 32J27, 32Q15.}
\keywords{Automorphism groups of compact complex manifolds, algebraic dimension 0, complex tori, conic bundles, Jordan properties of groups}
\maketitle

\section {Introduction}\label {intro}

  Let $X$ be a compact  complex connected      manifold. We denote by $\Aut(X)$ and $\Bim(X) $   the groups of automorphisms and  bimeromorphic  selfmaps of $X, $ respectively.
 If $X$ is projective,  $\Bir(X) $ denotes the group of birational automorphisms of $X.$ As usual,  $\BP^n$ stands for the $n$-dimensional complex projective space;  $a(X)$ stands for the algebraic dimension of $X.$  All manifolds in this paper are assumed to be {\bf complex  compact and connected} unless otherwise stated. 
 
V.L. Popov  in  \cite{Pop} defined the Jordan property of a group and raised 
 the following question:
{\it   when the groups $\Aut(X)$ and $ \Bir(X)$  are Jordan?}

 \begin{defn}\label{groups}  {}   \    \begin{itemize}\item     A group $G$ is called {\sl bounded} if the orders of its finite subgroups are bounded by an universal constant that depends only on $G$  (\cite[Definition 2.9]{Pop}).
\item A group $G$ is called {\sl Jordan} if there is a positive integer $J$ such that
every finite subgroup $B$ of $G$ contains an abelian subgroup $A$  that is normal in $B$ and such that the index $[B:A]\le J$ \cite[Definition 2.1]{Pop}.\end{itemize}\end{defn}
 In this paper we are interested in the following  property of groups.
  \begin{defn}\label{groups1}  We call a group  $G$  {\sl very Jordan} if there exist  a  commutative normal subgroup  $G_0$ of $G$ and a bounded group $F$ that sit in  a short exact sequence\begin{equation}\label{veryjordan}
1\to G_0\to G\to F\to 1.\end{equation}
\end{defn}
\begin{remark}\label{vstavka1} 1) Every finite group is bounded, Jordan, and very Jordan. 

2) Every commutative group is Jordan and very Jordan. 

3) Every finitely generated commutative group is bounded.

4) A subgroup of a very Jordan group is very Jordan.\end{remark}

5) ``Bounded" implies ``very Jordan", ``very Jordan" implies ``Jordan".

The first goal  of the paper is   to find complex manifolds with very Jordan group $\Aut(X)$ or $\Bim(X).$  To this end we prove the following 
generalization of \cite[Lemma  2.5]  {MengZhang},
and  of \cite[ Lemma 3.1]{Kim}.
\begin{Proposition}\label{abounded1}(see \propref{abounded})
Let $X$ be a connected compact complex   K\"{a}hler   manifold  and   $F=\Aut(X)/\Aut_0(X),$ where $\Aut_0(X) $ is  the connected identity component of  $\Aut(X).$   Then  
$F$ is bounded.\end{Proposition}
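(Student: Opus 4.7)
The plan is to exploit the natural action of $\Aut(X)$ on integral cohomology and split the analysis of $F$ into two parts: the image and the kernel of the induced representation on $H^{2}(X,\BZ)$, each bounded by a classical theorem.

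First I would observe that, since $\Aut_0(X)$ is connected and $H^{*}(X,\BZ)$ is a discrete lattice, every element of $\Aut_0(X)$ acts trivially on $H^{*}(X,\BZ)$. Hence the $\Aut(X)$-action on the torsion-free lattice $L:=H^{2}(X,\BZ)/\text{torsion}$ descends to a homomorphism
\[
\rho\colon F \longrightarrow \GL(L)\cong \GL(n,\BZ), \qquad n=\dim_{\BQ} H^{2}(X,\BQ).
\]
For the image, I would invoke Minkowski's classical theorem: every finite subgroup of $\GL(n,\BZ)$ has order at most a constant $M(n)$ depending only on $n$. Consequently $|\rho(B)|\le M(n)$ for every finite subgroup $B\subseteq F$.

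Next I would show that the \emph{whole} kernel $\ker\rho$ is already finite. Fix a K\"ahler class $[\omega]\in H^{2}(X,\BR)$ and consider
\[
\Aut_{[\omega]}(X):=\bigl\{f\in \Aut(X) : f^{*}[\omega]=[\omega]\bigr\}.
\]
By a theorem of Lieberman for compact K\"ahler manifolds, $\Aut_{[\omega]}(X)$ has only finitely many connected components, so $\Aut_{[\omega]}(X)/\Aut_0(X)$ is finite. Any representative of an element of $\ker\rho$ acts trivially on $H^{2}(X,\BZ)$ and in particular fixes $[\omega]$, so $\ker\rho\subseteq \Aut_{[\omega]}(X)/\Aut_0(X)$. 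Therefore $K:=|\ker\rho|$ is finite and depends only on $X$.

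Combining, for every finite subgroup $B\subseteq F$ one has $|B|\le |\rho(B)|\cdot|B\cap \ker\rho|\le M(n)\cdot K$, a bound depending only on $X$, which is the required boundedness. The main obstacle is the second step: without the K\"ahler hypothesis the kernel of the cohomological representation can easily fail to be finite modulo the identity component (consider a complex torus acting on itself by translations), and Lieberman's theorem is the precise input that rules this out in the K\"ahler setting.
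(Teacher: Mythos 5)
Your argument is correct and is essentially the paper's own proof: both factor $F$ through its action on second cohomology, bound the image via Minkowski (you use $\GL(n,\BZ)$ on $H^2(X,\BZ)/\mathrm{torsion}$, the paper uses $\GL(b_2(X),\BQ)$ on $H^2(X,\BQ)$), and bound the kernel by noting it fixes a K\"ahler class and invoking the Fujiki--Lieberman finiteness of $\Aut_{[\omega]}(X)/\Aut_0(X)$. The only cosmetic difference is that the paper packages the conclusion as a short exact sequence rather than your direct order estimate.
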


It follows that  if group $\Aut_0(X) $ is commutative, then $\Aut(X) $  is very Jordan.

  \begin{Example}\label{vvveryjordan}    
   If $X$ is a compact complex   K\"{a}hler   manifold of non-negative Kodaira dimension, then $\Aut(X)$ is very Jordan (\cite[Proposition 5.11]{Fu78} and \corref{vj}   below).\end{Example}

In this paper we study another wide and interesting class of complex manifolds with very Jordan group of automorphisms, namely, compact uniruled manifolds that are equidimensional rational  fibrations  (i.e. all components of all the fibers are one-dimensional and the general fiber is $\BP^1$)
over complex tori of algebraic dimension zero. 

In order to demonstrate the role of such manifolds,   we want to survey Jordan  properties of  $\Aut(X)$ and $\Bim(X)$ for various types of  compact  complex manifolds   $X.$

 The group $\Aut(X)$ of any connected complex compact manifold $X$ carries a natural structure of a complex (not necessarily connected) Lie group such that the action map $\Aut(X)\times X \to X$ is holomorphic (Theorem of  Bochner-Montgomery,  \cite{BM}).  It is known, for example, to be Jordan if \begin{itemize}\item
   $X$ is projective (Sh. Meng, D.-Q. Zheng, \cite {MengZhang});
 \item $X$ is a compact complex   K\"{a}hler   manifold    (J.H. Kim, \cite{Kim}).
 \end{itemize}
  Moreover, the connected identity component  $\Aut_0(X)\subset \Aut(X)$ of $\Aut(X)$ is Jordan for every compact  complex space $X$
  (\cite[Theorems 5  and 7]{Pop18}).

  Groups  $\Bir (X)$ and $\Bim (X) $ of birational and bimeromorphic transformations, respectively, are    more complicated. 
 
 \begin{Example}\label{dim2}
   In the case of  projective  varieties  $X$, proven was  by  V.L. Popov \cite{Pop} that $\Bir(X)$ is Jordan if $\dim(X)\le 2$ and $X$ is not birational to a product of an elliptic curve and $\BP^1$. (The case of $X=\BP^2$ was done earlier by J.-P. Serre, \cite{Serre1}.)\end{Example}
 
 Consider the following {\bf LIST}  of manifolds: 

\begin{enumerate}\item $E$ - an elliptic curve;\item
 $A_n$ - an abelian variety  of dimension $n;$\item $T:=T_{n,a}$ - a complex torus with dimension $\dim  T=n$ and algebraic dimension $a(T)=a;$\item  $S_b$ - a bielliptic surface;\item  $S_{K1}$ - a surface of Kodaira dimension 1;\item  $S_{K}$ - a  Kodaira  surface (it  is not a   K\"{a}hler   surface).\end{enumerate}
 
 \begin{Example}\label{exbound}  
   \begin{itemize}\item[(1)]
     If $S$ is a projective   surface  with non-negative Kodaira dimension  then $\Bir(S)$ is bounded unless it appears on the {\bf LIST},   \cite[Theorem1.1]{PS19-1}; 
\item[(2)] If $X$ is a non-uniruled projective variety with irregularity $q(X)=0,$ then $\Bir(X)$ is bounded,  \cite[Theorem1.8]{PS14}.\end{itemize}\end{Example}

 \begin{Example}\label{jordan}     
 \begin{itemize} 
 \item[(1)]  $\Bir(X)$    is Jordan for a projective variety $X$  if   either $X$ is  not uniruled   or $X =\BP^N$ (the latter case  was proven in \cite{PS14} modulo the Borisov-Alekseev-Borisov  conjecture that was later established by C. Birkar \cite{Bi});
  \item[(2)]  If $X$ is a uniruled smooth  projective variety that is a non-trivial conic bundle over a non-uniruled smooth projective
  variety $Y$  then  $\Bir(X)$ is Jordan  (\cite{BZ17});
 \item[(3)] 
 If $X$ is a projective threefold then $\Bir(X)$ is Jordan unless $X$ is birational to a direct product $E\times \BP^2$ or $S\times \BP^1,$ where a surface $S$  appears in the {\bf LIST},
 \cite {PS18};
  \item[(4)] If $X$ is a non-algebraic compact uniruled   complex   K\"{a}hler   threefold   then $\Bim(X) $ is Jordan unless  $X$ is either the projectivization of a rank two vector bundle over $T_{2,1}$ (and $a(X)=1$)  or  $X$ is bimeromorphic to $\BP^1\times T_{2,1}$ (and $a(X)=2$),   \cite{PS19-2},\cite{CP2}.\end{itemize}\end{Example}
  
 \begin{Example}\label{nonjordan} \begin{itemize} \item[(1)]  If $X$ is a projective variety, birational to $\BP^m\times A_n, \ n,m>0,$ then  $\Bir(X)$ is not Jordan \cite{Zar14};  
  \item[(2)] The group  $\Bim(X) $ is not Jordan for a certain  class of $\BP^1$-bundles     (including the trivial ones) over complex tori of positive algebraic dimension,  \cite{Zar19}. 
 \end{itemize}\end{Example} 

 These examples show that the worst case  scenario for Jordan properties of $\Bim(X)$ or $\Bir(X)$  occurs 
 when   $X$ is  a uniruled variety (K\"{a}hler   manifold)     that is  fibered over a torus of positive algebraic dimension.

  The second goal of this paper is to check 
 what happens in the similar situation when a compact complex manifold is uniruled and fibered over a  torus of  algebraic dimension zero. It appears that the Jordan properties are drastically  different from the situation when the  torus  has  positive algebraic dimension.

    Let $X,Y$ be compact connected   complex     manifolds  endowed with a holomorphic map   $p:X\to Y. $ Assume that \begin{itemize}\item
       dimension of  the fiber $P_y:=p^{-1}(y)$ is 1 for every point $y\in Y;$  \item
 there is an analytic subset $Z\varsubsetneq Y$ such that for every point  $y\not\in Z$
the fiber $p^*(y)$ is reduced and isomorphic to $\BP^1.$ \end{itemize}

 In this situation we call  $P_y, y\not\in  Z,$ a general   fiber and $X$ (or a triple  $(X,p,Y) $) an  equidimensional rational bundle over $Y.$  (Such bundles appear naturally in the classification of non-projective smooth compact   K\"{a}hler   uniruled threefolds \cite{CP2}.)  If $X$ is a holomorphically locally trivial fiber bundle over $Y$ with fiber $\BP^1$ we call it a $\BP^1$-bundle.  If $X$ is a projectivization $\BP(E)$   of a rank 2 holomorphic vector bundle $E$ over $Y,$ we will say that $X$ is a linear  $\BP^1$-bundle over $Y.$ 
  We consider manifolds $Y$  with $a(Y)=0$ meeting certain additional conditions.  

 \begin{Definition}\label{poor} We say that a compact connected complex        manifold $Y$ of positive dimension is {\sl  poor} if it enjoys the following properties.
\begin{itemize}
\item
The algebraic dimension $a(Y)$ of $Y$ is $0$.
\item
$Y$ does not contain analytic subspaces of codimension 1.
\item
$Y$  does not contain  rational curves, i.e., it is meromorphically hyperbolic in the sense of Fujiki \cite{Fu80}.
\end{itemize}\end{Definition}

 A  complex torus  $T$ with $\dim(T)\ge 2 $ and  $a(T)=0$ is a { poor}  K\"{a}hler manifold.  Indeed, a complex torus  $T$  is a   K\"{a}hler   manifold that does not contain rational curves.
 If $a(T)=0, $ it contains no analytic subsets of codimension 1 \cite[Corollary 6.4,   Chapter 2]{BL}.  An explicit example of such a torus is given in \cite[ Example 7.4] {BL}. Another 
  example of a  {poor} manifold is   provided by  a non-algebraic $K3$ surface  $S$ with $\NS(S)={0}$   (see \cite[Proposition  3.6, Chapter VIII]{BHPV}).

  \begin{remark}
1) Clearly, the complex dimension of a {poor} manifold is at least 2.  

2)  A {\sl generic} complex torus of given dimension $\ge 2$ has algebraic dimension 0 and therefore is poor.

 \end{remark}

  Let  $(X,p,Y) $  be an  equidimensional rational bundle over  a poor manifold  $Y.$  Since   $Y$
 contains no rational curves,  there are no non-constant holomorphic maps 
$\BP^1\to Y.  $  It follows that  every map $f\in\Bim(X)$ is $p-$fiberwise, i.e., there exists a group homomorphism $\tau:\Bim(X)\to \Aut(Y)$ 
(see \lemref{tau})  such that for all $f\in\Bim(X)$
$$p\circ f=\tau(f)\circ p.$$  We denote by $\Bim (X)_p,$ ($\Aut(X)_p$)  the kernel of $\tau,$ i.e.,  the subgroup of all  those $f\in\Bim(X) $ (respectively, $f\in\Aut(X)$)
that leave every fiber  $P_y:=p^{-1}(y), y\in Y,$  fixed. 
We  prove the following 

\begin{Theorem}\label{IntroMain1}  Let  $(X,p,Y) $  be an equidimensional rational bundle over a  poor  manifold 
 $Y.$    Then:
 \begin{itemize}\item   $(X,p,Y) $ is a $\BP^1-$bundle
 (see  \propref{p1bundle}); \item $\Bim(X)=\Aut(X)$  (see  \corref{bimaut});
\item The restriction homomorphism $\Aut(X)_p\to \Aut(P_y),  \ f\to f |_{P_y}$ is a group embedding.  Here $P_y=p^{-1}(y)$ for any point $y\in Y$  (\lemref{caseA},\lemref{caseB},  and Case  {\bf C}(h)\ of \secref{p1}).
\end{itemize}
Assume additionally that   $Y$ is K\"{a}hler   and $X$ is not bimeromorphic to the direct product    $Y\times \BP^1.$ Then:
\begin{itemize}
\item   The connected  identity component  $\Aut_0(X)$ of  complex Lie  group $\Aut(X)$   is {\bf commutative}  (\thmref{main1}); 
\item Group $\Aut(X)$ is very Jordan. Namely, there is a short exact sequence  
 \begin{equation}\label{goal}     1\to \Aut_0(X)\to \Aut(X)\to F\to 1, \end{equation}
where  $F$ is a bounded   group (\thmref{main1});
\item Commutative group $\Aut_0(X)$ sits in a short exact sequence of complex Lie groups\begin{equation}\label{goal2}
1\to\Gamma\to \Aut_0(X)\to H\to 1,\end{equation}
where $H$ is a complex  torus and  one of the following  conditions holds  (\propref{ap} and Equation \eqref{gamma300}):
 \begin{itemize}\item $\Gamma=\{id\}$, the trivial group;
 \item $\Gamma\cong\BC^{+},$  the additive group of complex numbers;
\item $\Gamma\cong\BC^{*},$   the multiplicative  group of complex numbers.  \end{itemize}
\end{itemize}\end{Theorem}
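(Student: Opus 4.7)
The first three bullets are delivered directly by the cited \propref{p1bundle}, \corref{bimaut}, and Lemmas~\ref{caseA}, \ref{caseB} together with Case~\textbf{C}(h) of \secref{p1}. They yield the short exact sequence
\begin{equation*}
1 \to \Aut(X)_p \to \Aut(X) \xrightarrow{\tau} \tau(\Aut(X)) \to 1,
\end{equation*}
with $\tau(\Aut(X)) \subseteq \Aut(Y)$ and $\Aut(X)_p \hookrightarrow \Aut(\BP^1) = \PGL_2(\BC)$. All subsequent work takes this sequence as its starting point.

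To invoke \propref{abounded1} one first needs that $X$ itself is K\"ahler. The plan is to exploit the absence of codimension-one analytic subsets of $Y$: this kills the Brauer-theoretic obstruction to lifting the $\PGL_2$-bundle $X\to Y$ to a rank-$2$ vector bundle, so $X=\BP(E)$ is a linear $\BP^1$-bundle, and combining a K\"ahler form on $Y$ with the relative hyperplane class produces a K\"ahler form on $\BP(E)$. Once $X$ is K\"ahler, \propref{abounded1} gives at once that $F:=\Aut(X)/\Aut_0(X)$ is bounded.

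The core work is the commutativity of $\Aut_0(X)$. Restricting $\tau$ to identity components gives
\begin{equation*}
1 \to \Gamma \to \Aut_0(X) \to H \to 1, \qquad \Gamma := \Aut_0(X)_p,\ H := \tau(\Aut_0(X)).
\end{equation*}
For a poor K\"ahler $Y$ (in particular for a complex torus of algebraic dimension $0$), $\Aut_0(Y)$ is a compact complex torus by a Fujiki/Lieberman-type structure theorem, so $H$, being a connected closed complex Lie subgroup of $\Aut_0(Y)$, is itself a complex torus and therefore commutative. The kernel $\Gamma$ is a connected complex Lie subgroup of $\PGL_2(\BC)$, so it is one of $\{1\}$, $\BC^+$, $\BC^*$, a Borel subgroup $B$, or all of $\PGL_2(\BC)$; the first three are commutative. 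I would rule out the remaining two using the hypothesis that $X$ is not bimeromorphic to $Y\times\BP^1$: if $\Gamma = \PGL_2(\BC)$, fiberwise triple transitivity globally trivializes the bundle; if $\Gamma = B$, the unique Borel-fixed point on each fiber provides a global holomorphic section, and the $\BC^*$- or $\BC^+$-factor then splits the bundle bimeromorphically. Either alternative contradicts the standing hypothesis, forcing $\Gamma$ into $\{\{1\}, \BC^+, \BC^*\}$. The extension is then automatically central, because the connected group $H$ acts on the abelian algebraic group $\Gamma$ by conjugation inside $\Aut(X)$, and hence through the identity component of $\Aut(\Gamma)$, which is trivial for each of the three options.

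This simultaneously yields \eqref{goal2}, the commutativity of $\Aut_0(X)$, and, together with the boundedness of $F$, the short exact sequence \eqref{goal} showing $\Aut(X)$ very Jordan. \emph{The main obstacle} is the Borel/$\PGL_2(\BC)$ dichotomy above: one must convert a nonabelian connected vertical automorphism group into a genuine bimeromorphic trivialization of $X$, using the absence of both rational curves and codimension-one analytic subsets of $Y$ to guarantee that partial sections or local splittings defined off a thin set extend to the whole of $Y$ without introducing base points.
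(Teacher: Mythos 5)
Your reduction of the second half of the theorem to the extension $1\to\Gamma\to\Aut_0(X)\to H\to 1$ follows the paper's outline, but the decisive step --- commutativity of $\Aut_0(X)$ --- is not actually proved. You argue (at best) that the extension is central and then declare commutativity; that inference is invalid: a central extension of an abelian group by an abelian group need not be abelian (Heisenberg groups are central extensions of $\BC^2$ by $\BC^+$). The paper's \lemref{commutative} has a second, essential step that you omit: once $\Gamma$ is central, the commutator map descends to a holomorphic map $H\times H\to\Gamma$, which is constant because $H$ is a \emph{compact} torus and $\Gamma\cong\BC^+$ or $\BC^*$ carries no nonconstant holomorphic maps from a compact source. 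Moreover, your stated reason for centrality is wrong when $\Gamma\cong\BC^+$: the automorphism group of $\BC^+$ is $\BC^*$, which is connected, so ``the identity component of $\Aut(\Gamma)$ is trivial'' fails; centrality in that case again requires compactness of $H$. Finally, the compactness (even closedness) of $H=\tau(\Aut_0(X))$ is not automatic --- a holomorphic one-parameter subgroup of the torus $\Aut_0(Y)$ can have non-closed image. The paper obtains it from Fujiki's meromorphic-structure results together with the decomposition \eqref{Chevalley} applied to $X$, so that $H$ is the image of the compact part $\Tor(X)$; this is precisely where K\"ahlerness of $X$ enters \thmref{main1}.

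Your route to that K\"ahlerness is also unjustified: the obstruction to writing a $\BP^1$-bundle as $\BP(E)$ lives in $H^2(Y,\mathcal{O}_Y^*)$, and the absence of codimension-one analytic subsets constrains effective divisors, not this $H^2$, whose $2$-torsion is typically nontrivial for tori of algebraic dimension $0$; the paper never claims $X$ is linear (in the type {\bf C} situation only an \'etale double cover of $X$ is a projectivization), and it proves $X$ K\"ahler only when $\Aut(X)_p\neq\{id\}$, via the fixed-point divisor and Voisin's Proposition 3.18. Likewise, the $\PGL(2,\BC)$/Borel dichotomy that you flag as ``the main obstacle'' is exactly what the paper's fixed-point analysis settles (\propref{fixedpoints}, \lemref{caseA}, \lemref{caseB}, \lemref{Yf}, \propref{ap}): for instance a unipotent (type {\bf B}) vertical automorphism excludes semisimple (type {\bf A}) ones, so a Borel cannot occur; if you invoke those cited lemmas to close this, you are in effect reproducing the paper's classification, and then \propref{ap} already hands you $\Aut(X)_p$ and \eqref{gamma300} directly. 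As written, the proposal has genuine gaps at the commutativity step, at the compactness of $H$, and at the K\"ahler/linearity claim.
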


 The paper is organized as follows. Section \ref{prel} contains preliminary results about automorphisms of equidimensional rational bundles and meromorphic groups 
 in a sense of A. Fujiki. In Section \ref{poormanifolds}  we deal with equidimensional  rational 
 bundles over poor manifolds and prove that every such equidimensional rational bundle is a $\BP^1-$bundle. 
 In Section \ref{p1}, we study  $\BP^1$-bundles $X$  over   a poor manifold $T$ and classify their nontrivial fiberwise   automorphisms
 in terms of the   corresponding fixed points sets. In particular, we prove that $\Bim(X)=\Aut(X).$  
  In Section \ref{abelian}  assuming that our poor manifold $T$ is K\"{a}hler  we  prove that the connected identity component $\Aut_0(X)$ of $\Aut(X)$  is  commutative. In Section  \ref{example} we provide a class of examples of $\BP^1$-bundles $X$ over complex tori $T$ of 
  algebraic dimension 0 that do not admit a section but admit a bisection that coincides with the set of fixed points of a certain equivariant  automorphism.
 
 {\bf Acknowledgements}. We are grateful to Frederic Campana,  Vladimir L. Popov, and Constantin Shramov for useful, stimulating discussions and very helpful  comments.  We thank Igor Dolgachev for help  with references. We are deeply grateful to the  referees:  their valuable comments lead us to substantial revision of the paper.

\section{Preliminaries and Notation}\label{prel}

We assume that all complex manifolds under  consideration are connected and compact. 
 We use the following  notation  and assumptions.

 {\bf Notation  and Assumptions 1.}
 
  \begin{itemize}
\item  $\Bim(X), \Aut(X)$ stand for the groups of  all   bimeromorphisms and all  automorphisms of  a  complex  manifold $X,$    respectively.
\item  $\Aut_0(X)$  stands for the connected identity  component of  $ \Aut(X)$  (as a complex Lie group).
\item If $p:X\to Y$ is a morphism of complex manifolds, then $\Bim(X)_p, \Aut(X)_p$ is the subgroup of all $f\in \Bim(X) $ (respectively, $ f\in \Aut(X)$)  such that $p\circ f=p.$
\item $\cong$ stands for  `` isomorphic groups" (or isomorphic complex Lie groups if the groups involved are the ones),  and $\sim$ for biholomorphically  isomorphic complex manifolds.
\item   $id$  stands for identity automorphism.
\item $\BP^n_{(x_0:...:x_n)}$ stands for a complex projective space $\BP^n$ with homogeneous coordinates $(x_0:...:x_n).$
\item  $\BC_z, \ov{\BC}_z\sim\BP^1$  is the  complex line (extended complex line, respectively) with coordinate $z$.
\item    For an element $m\in\PGL(2,\BC)$  we define  $\mathrm{DT}(m):=\frac{\tr^2(M)}{\det(M)}$ where $M\in\GL(2,\BC)$ is any matrix representing $m,$  \ $\tr(M)$ and $\det(M)$ are the trace and the determinant of  $M,$ respectively.   $\mathrm{DT}(m)=4$ if and only if $m$ is proportional either to  the identity matrix or to a unipotent matrix.
\item $\BC^+$ and $\BC^*$ stand for   complex Lie groups $\BC$ and $\BC^*$ with additive and multiplicative group structure, respectively.
\item $\dim (X),  \ a(X) $ are  the  complex and algebraic dimensions of a compact complex manifold $X$, respectively.
\item    Let $X,Y$ be two compact connected irreducible reduced analytic complex spaces. A meromorphic map   $f:X\to Y$ relates to every point  $x\in X$  a subset $f(x)\subset Y$
(the image of $x$)  such that the  following conditions are met  \begin{enumerate}\item The graph $G_f:=\{(x,y) \ | y\in f(x)\subset X\times Y\} $ is a connected irreducible
complex analytic subspace of $X\times Y$ with $\dim (G_f)=\dim ( X);$
\item There exist an open dense subset $X_0\subset X$ such, that $f(x)$ consists in one point for every $x\in X.$ \end{enumerate} 
\item We say that a compact complex manifold $Y$ contains no rational curves if there are no {\bf nonconstant} holomorphic maps $\BP^1 \to Y$.
\item Following  A. Fujiki, we call a compact complex manifold  {\bf meromorphically hyperbolic} if it contains no   rational curves, (\cite{Fu80}).
\item   According  to  A. Fujiki \cite[Definition 2.1]{Fu78},   a  {\bf meromorphic structure}
on a complex Lie group $G$ is  a compactification $G^*$ of $G$  such  that 

{\sl the group multiplication $\mu: G\times    G\to G$ extends to a meromorphic map   $\mu^*: G^*\times   G^*\to G^*$   and  $\mu^*$ is holomorphic on $G^*\times   G\cup  G\times   G^*$}.

\item    Following  A. Fujiki   we say that a complex Lie group $G$ acts meromorphically    on a complex manifold $Z$ if 

\begin{enumerate}\item G acts biholomorphically on $Z;$ 
\item  there is a  meromorphic structure $ G^*$ on $G$  such that the   $G-$action $\sigma:G\times Z\to Z $ extends  to a meromorphic map $\sigma^*:G^*\times Z\to Z$ (see \cite[Definition 2.1]{Fu78}  for details).
\end{enumerate} 
\end{itemize}

 It was proven in \cite{Fu80}  that if   a      manifold   $Y$  is meromorphically hyperbolic  then 
 
 \begin{enumerate}\item
 every meromorphic map $f:X\to Y$  is holomorphic for any complex manifold $X.$
 \item If, in addition,  $Y$ is K\"{a}hler then \begin{itemize}\item every   connected  component  of the set $H(Y,Y)$ of all holomorphic maps $Y\to Y$   (regarded as a certain subspace of the Douady 
 complex analytic space $D_{Y\times Y}$) 
 is compact;
 \item  in particular,  $\Aut_0(Y)$ is a compact  complex   Lie group, that is isomorphic to a certain   complex torus $\Tor(Y)$ (see also \cite[Corollary  3.7]{Fu78}). 
\item actually,  $\Aut_0(Y)$  is isomorphic to a complex torus for any compact complex   K\"{a}hler   manifold $X$ of non-negative Kodaira dimension \cite[Proposition 5.11]{Fu78}). \end{itemize}
  \end{enumerate}

 In general,  let $Z$ be a  compact complex  connected  K\"{a}hler    manifold. The group  $\Aut_0(Z)$ acts meromorphically on $Z$, and 
  the analogue of the Chevalley  decomposition for algebraic groups is valid for complex Lie group $\Aut_0(Z):$
\begin{equation}\label{Chevalley}
1\to L(Z)\to \Aut_0(Z)\to \Tor(Z)\to 1\end{equation}
where $L(Z) $ is     bimeromorphically     isomorphic to a linear group, and $\Tor(Z)$ is a complex  torus        (\cite[Theorem 5.5]{Fu78},     \cite [Theorem 3.12]{Lie}, \cite[Theorem 3.28]{CP}).

If $L(Z)$  in \eqref{Chevalley} is not trivial,   $Z$ contains a rational curve.  Moreover, according  to
\cite[Corollary  5.10]{Fu78},  $Z$ is bimeromorphic to a fiber space  whose general fiber is $\BP^1.$

The next Proposition is similar to Lemma 3.1 of \cite{Kim}.
\begin{Proposition}\label{abounded}
Let $X$ be a connected complex compact   K\"{a}hler   manifold  and   $F=\Aut(X)/\Aut_0(X).$ Then group 
$F$ is bounded.\end{Proposition}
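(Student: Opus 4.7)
The plan is to study the natural action of $\Aut(X)$ on the singular cohomology group $H^2(X,\BZ)$ and combine Lieberman's theorem with Minkowski's classical bound on finite subgroups of $\GL(n,\BZ)$.

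First I would observe that $\Aut(X)$ acts on the finitely generated abelian group $H^2(X,\BZ)$ via pullback, giving a homomorphism $\Aut(X)\to \GL(H^2(X,\BZ))$. Since the target is a discrete group and $\Aut_0(X)$ is connected, the connected component $\Aut_0(X)$ lies in the kernel. Thus we obtain an induced homomorphism
\[
\rho\colon F=\Aut(X)/\Aut_0(X)\longrightarrow \GL(H^2(X,\BZ))\cong \GL(n,\BZ),
\]
where $n=\rk H^2(X,\BZ)<\infty$.

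Second, I would prove that $K:=\ker\rho$ is finite. This is where the K\"ahler hypothesis enters: fix any K\"ahler class $[\om]\in H^2(X,\BR)$; every element of $\ker\rho$ fixes $[\om]$. By Lieberman's theorem (see also Fujiki \cite{Fu78}), the stabilizer $\Aut_{[\om]}(X)$ of a K\"ahler class in $\Aut(X)$ has only finitely many connected components, i.e.\ $\Aut_{[\om]}(X)/\Aut_0(X)$ is finite. Since $K$ injects into this finite quotient, $|K|<\infty$.

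Third, I would invoke Minkowski's classical bound: for each $n$ there is a constant $M(n)$ such that every finite subgroup of $\GL(n,\BZ)$ has order at most $M(n)$. (One short argument: reduction modulo $3$ is injective on torsion elements of $\GL(n,\BZ)$, so any finite subgroup embeds into the finite group $\GL(n,\BZ/3\BZ)$.)

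Finally, for any finite subgroup $B\subseteq F$, one has $|B\cap K|\le |K|$ and $|\rho(B)|\le M(n)$, so
\[
|B|\;\le\;|K|\cdot M(n),
\]
a constant depending only on $X$. Hence $F$ is bounded.

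The main obstacle is the finiteness of $K$: this relies essentially on Lieberman's theorem for K\"ahler manifolds, which is precisely the ingredient unavailable for general compact complex manifolds and which distinguishes the present statement from the purely group-theoretic Minkowski input.
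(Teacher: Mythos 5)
Your proposal is correct and takes essentially the same route as the paper: a representation on degree-two cohomology whose image is bounded by Minkowski and whose kernel fixes a K\"ahler class, hence is finite modulo $\Aut_0(X)$ by Lieberman--Fujiki, the paper differing only in using $H^2(X,\BQ)$ and the boundedness of $\GL(b_2(X),\BQ)$ instead of $H^2(X,\BZ)$ and $\GL(n,\BZ)$. The one small point to fix is your identification $\GL(H^2(X,\BZ))\cong \GL(n,\BZ)$, which fails if $H^2(X,\BZ)$ has torsion; pass to $H^2(X,\BZ)$ modulo torsion (or to rational cohomology, as the paper does), or note that the automorphism group of any finitely generated abelian group is bounded.
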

\begin{proof}  By functoriality, there is the  natural      group homomorphism 
\begin{equation}\label{phi}
\phi:F\to \Aut(H^2(X,\BQ)), \ f\in \Aut(X)\to f^*\in \Aut_{\BQ}(H^2(X,\BQ)).\end{equation}
Connected Lie group  $\Aut_0(X)$ is arcwise connected. Hence, $f^*$ is the identity map  for all $f\in \Aut_0(X).$
  The image $\phi(\Aut(X))$ is bounded, since it is a subgroup of a bounded (thanks to Minkowski's theorem,  \cite[Section 9.1]{Serre})  group $\Aut_{\BQ}(H^{2}(X,\BQ))\cong \GL(b_2(X),\BQ).$  (Here $b_2(X)=\dim_{\BQ}H^{2}(X,\BQ)$ is the second Betti number of $X$). On the other hand,
if $f\in \ker (\phi) $   then  its action on $H^{2}(X,\BR)=H^{2}(X,\BQ)\otimes_{\BQ}\BR$ is trivial as well.  Thus, if $\omega$ 
is a   K\"{a}hler   form on $X, $ and   $\ov{\omega} $ is its cohomology class in  $H^{2}(X,\BR),$  and  if $f\in \ker (\phi), $   then 
\begin{equation}\label{omega}f^*(\ov \omega)=\ov \omega.\end{equation}
  Let  
$\Aut(X)_{\ov \omega}\subset \Aut(X)$ be the  subgroup  of all  automorphisms meeting condition \eqref{omega}.   We have   $ \Aut_0(X) \subset \ker(\phi)\subset \Aut(X)_{\ov \omega}. $  Since  the quotient group  $\Aut(X)_{\ov \omega}/ \Aut_0(X)$ is finite (\cite[Theorem     4.8]{Fu78}, 
     \cite[Proposition 2.2 ]{Lie}), the  quotient  $ \ker (\phi)/\Aut_0(X)\subset \Aut(X)_{\ov \omega}/\Aut_0(X)$ is a finite group. 
Thus, we have a short  exact sequence   of groups:
$$1\to \ker (\phi)/\Aut_0(X)\to(\Aut(X)/\Aut_0(X)=F)\to  \phi(\Aut(X))\to 1.$$

The  group $\ker (\phi)/\Aut_0(X)$   is finite, the  group  $\phi(\Aut(X))$     is bounded, thus $\Aut(X)/\Aut_0(X)$ is also bounded.\end{proof}

 \begin{remark}  Our  proof  was   inspired by the proofs of Lemma 3.1 of  \cite{Kim}, and  Lemma 2.5 of              
\cite {MengZhang}. Namely, Lemma 3.1 of Jin Hong Kim,  \cite{Kim}, states the following.
 
 Let $X$ be a normal compact K\"{a}hler variety. Then there exists a positive
integer $l,$ depending only on X, such that for any finite subgroup $G$ of $\Aut(X)$
acting biholomorphically and meromorphically on $X$ we have
$[G : G \cap \Aut_0(X)] \le l.$
 
 We cannot use straightforwardly  this Lemma since a finite subgroup of $\Aut(X)/\Aut_0(X)$ may not be isomorphic to a quotient $G/(G\cap  \Aut_0(X))$ where $G$ is a finite subgroup  of $\Aut(X).$ 
 \end{remark}
 
 \begin{corollary}\label{vj}  Let $X$ be a compact complex K\"{a}hler manifold of Kodaira dimension $\varkappa(X)\ge 0.$ Then $\Aut(X)$ is very Jordan. \end{corollary}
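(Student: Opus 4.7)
The plan is to assemble this corollary directly from the two key ingredients already in hand: Proposition \ref{abounded} above, and Fujiki's structural result on $\Aut_0(X)$ for manifolds of non-negative Kodaira dimension. Since ``very Jordan'' requires exhibiting a short exact sequence $1\to G_0 \to G\to F\to 1$ with $G_0$ commutative and normal and $F$ bounded, the natural candidate sequence is
\begin{equation*}
1\to \Aut_0(X)\to \Aut(X)\to \Aut(X)/\Aut_0(X)\to 1.
\end{equation*}
The normality of $G_0:=\Aut_0(X)$ in $\Aut(X)$ comes for free from the fact that the identity component of any complex Lie group is a normal subgroup.

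First I would invoke \cite[Proposition 5.11]{Fu78}, quoted in the third bullet of item (2) of the discussion preceding \propref{abounded}: if $X$ is compact K\"ahler with $\varkappa(X)\ge 0$, then $\Aut_0(X)$ is isomorphic (as a complex Lie group) to a complex torus. In particular, $\Aut_0(X)$ is commutative, so the kernel term in the above short exact sequence satisfies the required hypothesis on $G_0$.

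Next I would apply \propref{abounded} to conclude that the quotient $F:=\Aut(X)/\Aut_0(X)$ is bounded. This is exactly the other condition needed by \defnref{groups1}, so the short exact sequence displayed above realizes $\Aut(X)$ as a very Jordan group.

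There is no real obstacle here; the work has been done upstream. The only thing worth double-checking is that the hypotheses of both ingredients are genuinely satisfied: $X$ is compact K\"ahler (given), so \propref{abounded} applies; and $X$ additionally has $\varkappa(X)\ge 0$, so Fujiki's proposition applies to give commutativity of $\Aut_0(X)$. Both inputs need the K\"ahler assumption, which is part of the hypothesis, so the argument goes through verbatim.
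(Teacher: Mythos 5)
Your argument is correct and is essentially identical to the paper's own proof: both combine Proposition~\ref{abounded} (boundedness of $\Aut(X)/\Aut_0(X)$) with Fujiki's Proposition~5.11 (that $\Aut_0(X)$ is a complex torus, hence commutative) via the obvious short exact sequence. Nothing further is needed.
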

 \begin{proof} In view of \propref{abounded} it is sufficient to prove that $\Aut_0(X)$ is commutative.   But this follows from \cite[Proposition 5.11]{Fu78}  that  asserts that 
 $\Aut_0(X)$ in this case  is a complex torus.\end{proof}. 
 
 \section{Equidimensional rational bundles over poor   manifolds}\label{poormanifolds}

We will use the following property of poor manifolds.
\begin{Lemma}\label{cover} Let $X,Y$ be connected compact manifolds, and let $f:X\to Y$ be a unramified  finite holomorphic covering.   Then 
\begin{itemize}\item If $Y$ is  K\"{a}hler, so is  $X;$   \item If $Y$ contains no rational curves, so does $X;$
\item If $Y$ contains no analytic subsets of  $\codim  \ 1$, so does $X;$
\item  If $Y$ is poor, so is   $X.$\end{itemize}
\end{Lemma}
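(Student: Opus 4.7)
The plan is to treat the four claims in order, exploiting that an unramified finite holomorphic covering $f:X\to Y$ is simultaneously (a) a local biholomorphism and (b) a finite proper holomorphic map with finite discrete fibers. Note that $X$ is automatically compact, since $f$ is proper and $Y$ is compact.

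For the K\"{a}hler assertion, I would pull back a K\"{a}hler form $\omega$ on $Y$. Because $f$ is a local biholomorphism, $f^{*}\omega$ is a real closed $(1,1)$-form on $X$ which is positive definite on every tangent space (as pullback by the differential of $f$, an isomorphism at each point), so $f^{*}\omega$ is a K\"{a}hler form. For the no-rational-curves assertion, suppose $\varphi:\BP^1\to X$ is a nonconstant holomorphic map; then $f\circ\varphi:\BP^1\to Y$ cannot be constant, since otherwise $\varphi$ would factor through a single fiber $f^{-1}(y)$, which is finite and discrete, forcing $\varphi$ to be constant on the connected space $\BP^1$. Thus $f\circ\varphi$ is a rational curve in $Y$, contradicting the hypothesis.

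For the codimension-one assertion, let $W\subset X$ be an analytic subset of pure codimension $1$. By Remmert's proper mapping theorem the image $f(W)$ is analytic in $Y$, and because $f$ has finite fibers we have $\dim f(W)=\dim W$, so $\codim_Y f(W)=1$, contradicting the hypothesis on $Y$. Combining the three previous items immediately gives the first two conditions in \defnref{poor} for $X$. It remains to verify $a(X)=0$. The key observation is that if $g$ is a non-constant meromorphic function on the compact connected manifold $X$, then the pole locus of $g$ is a nonempty pure codimension-one analytic subset: locally $g=p/q$ with $p,q$ coprime holomorphic, so the poles form $\{q=0\}$; and the locus cannot be empty, since otherwise $g$ would be globally holomorphic, hence constant by the maximum principle. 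Therefore no codimension-one analytic subsets in $X$ forces $a(X)=0$, which is precisely the third condition of \defnref{poor}.

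I do not expect a genuine obstacle; each of the four items follows from a standard structural fact about unramified finite covers and the local representation of meromorphic functions. The most delicate point is the last one, where one must correctly identify the pole divisor of a meromorphic function as a pure codimension-one analytic subset in order to derive $a(X)=0$ from the absence of codimension-one analytic subsets.
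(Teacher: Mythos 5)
Your proposal is correct and follows essentially the same route as the paper: pull back the K\"{a}hler form, push forward rational curves and codimension-one analytic subsets, and deduce $a(X)=0$ from the absence of codimension-one analytic subsets. You merely make explicit the standard facts (Remmert's proper mapping theorem, nonconstancy of the composition with a rational curve, and the pole divisor of a nonconstant meromorphic function) that the paper's proof leaves implicit.
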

\begin{proof}
Indeed, \begin{enumerate}\item  If $\omega$ is a   K\"{a}hler   form on $Y,$ then  its pullback  $f^*\omega$ is a   K\"{a}hler   form on $X$, thus $X$ is a   K\"{a}hler   manifold. 
\item If $X$ contained   a rational curve  $C$ then  $f(C)$ would be a rational curve in $Y$.
\item If $X$ contained  an   analytic subset $Z$ of $\codim \  1,$ then   $f(Z)$ would be a $\codim  \ 1$ analytic subset in $Y.$
\item  Thus if  $Y$ is poor, according (1) and (2), $X$  contains neither rational curves nor analytic subsets of $\codim \  1.$  In particular,  $a(X)=0.$  Thus, $X$ is poor. 
 \end{enumerate}\end{proof}

 An equidimensional rational bundle $(X,p,Y)$ defines  the holomorphically locally trivial    fiber bundle  with  fiber  $\BP^1$  over a certain  open dense subset $U\subset Y. $ 
Indeed,  by definition,  there is an open dense subset $U\subset Y$   of points $y\in Y$ such that     for    all $y\in U$ 
  the  fiber $P_y=p^{-1}(y)\sim \BP^1.$    By a theorem of W. Fischer and  H. Grauert  (\cite{FG}), the triple  $ (p^{-1}(U),p,U)$ is 
  a holomorphically locally trivial fiber   bundle.  Actually, we may (and will) assume that $U$ is a complement of an analytic subset of $Y,$   since the image of the set of  points  where $p$ is singular  is an analytic subset  (see, for example, \cite[Theorem  1.22]{PR}).

\begin{Definition}\label{fiberwise} (Cf. \cite{BZ18}).  Let $X, Y , Z$ be three  complex manifolds,  $f:X\to Y, g:Z\to Y$ be  holomorphic maps,  and $ h:X\to Z$ be a meromorphic map.  We say that 
$h$ is $f,g$-fiberwise  if there exists a holomorphic map $\tau(h):Y\to Y$  that may be included into the following commutative diagram.

\begin{equation}\label{diagram1001}
\begin{CD}
X @>{h}>>Z\\
@V f VV @Vg VV \\
Y @>{\tau(h)}>> Y
\end{CD}.
\end{equation}
 If $X=Z$ and $f=g$ we say that $h$ is $f-$fiberwise  (or equivariant).  If $\tau(h)=id$ we  say that $h$ is fiberwise.
 \end{Definition}

\begin{Lemma}\label{tau} (Cf.  \cite[Lemma 3.4]{BZ17}  for algebraic case). 
Assume that   $Y$ is a  connected compact complex    meromorphically hyperbolic   manifold and let $(X,p_X,Y)$ and $(Z,p_Z,Y)$   be   two equidimensional rational  bundles
over $Y.$

Then 
any surjective   meromorphic map $f:X\dashrightarrow Z$  is $p_X,p_Z-$ fiberwise. 
 \end{Lemma}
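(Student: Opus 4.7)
The strategy is to collapse the general fibres of $p_X$ via the composition $g := p_Z \circ f$ and then descend to obtain $\tau(f)\colon Y \to Y$. First, $g\colon X \dashrightarrow Y$ is meromorphic as the composition of a meromorphic map with a holomorphic one; since $Y$ is meromorphically hyperbolic, Fujiki's theorem (recalled in \secref{prel}) promotes $g$ to a genuine holomorphic map $g\colon X \to Y$.

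Second, let $U\subseteq Y$ be the dense open whose complement is the proper analytic subset $Z_0\subsetneq Y$ over which $p_X$ is a smooth $\BP^1$-bundle (the existence of $Z_0$ is part of the definition of an equidimensional rational bundle). For each $y\in U$ the restriction $g|_{P_y}\colon \BP^1 \to Y$ is a holomorphic map into a meromorphically hyperbolic manifold, hence constant; denote its value by $\tau(f)(y)$. This defines a set-theoretic map $\tau(f)\colon U \to Y$. Because $p_X^{-1}(U)\to U$ is a holomorphically locally trivial $\BP^1$-bundle by Fischer-Grauert, local holomorphic sections of $p_X$ exist over $U$, and composing $g$ with such a section exhibits $\tau(f)$ as locally holomorphic on $U$.

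Finally, to extend $\tau(f)$ to all of $Y$ I would consider the image $H := (p_X, g)(X) \subseteq Y \times Y$, which by Remmert's proper-mapping theorem is a compact analytic subset. The irreducible component $H_0$ of $H$ containing the graph of $\tau(f)|_U$ has pure dimension $\dim Y$, and its first projection to $Y$ is surjective and generically bijective. Hence $H_0$ is the graph of a meromorphic map $\tau(f)\colon Y \dashrightarrow Y$ in the sense made precise in \secref{prel}. Applying Fujiki's theorem once more to the meromorphically hyperbolic target $Y$ upgrades this meromorphic map to a holomorphic one, and the identity $p_Z\circ f = \tau(f)\circ p_X$ then holds on the dense open $p_X^{-1}(U)$ by construction, hence everywhere as meromorphic maps. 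The main obstacle I expect is precisely this extension step: naive Remmert-Stein extension of the graph of $\tau(f)|_U$ fails on dimensional grounds, so one must detour through the compact analytic image $H$ to recover analyticity, and the meromorphic-hyperbolicity hypothesis on $Y$ closes the argument by collapsing meromorphic maps into holomorphic ones.
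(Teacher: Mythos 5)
Your argument is correct, and its first two steps coincide with the paper's: both use Fujiki's theorem on the meromorphically hyperbolic target to promote $g=p_Z\circ f$ to a holomorphic map, and the absence of rational curves in $Y$ to collapse the fibres $P_y$ for $y\in U$, obtaining $\tau(f)$ holomorphic on $U$ via local trivializations (Fischer--Grauert). Where you genuinely diverge is the extension across $Y\setminus U$. The paper stays elementary: it observes that the set of $y$ for which $g|_{P_y}$ is constant is closed and contains the dense set $U$, so $\tau(f)$ is defined and continuous on all of $Y$, and then it extends $\tau(f)$ holomorphically near each point of $Y\setminus U$ by applying the first Riemann continuation theorem to its locally bounded coordinate functions. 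You instead take the image $H=(p_X,g)(X)\subset Y\times Y$, analytic by Remmert's proper mapping theorem, identify the component through the graph of $\tau(f)|_U$ as the graph of a meromorphic self-map of $Y$, and invoke Fujiki's theorem a second time to make that map holomorphic; the identity theorem then gives $p_Z\circ f=\tau(f)\circ p_X$ everywhere, and constancy of $g$ on the special fibres comes out a posteriori. Your route avoids the constancy-locus and continuity arguments at the price of using meromorphic hyperbolicity twice and Remmert's theorem instead of only the Riemann extension theorem; both proofs are sound. Two small points you pass over quickly are harmless but worth noting: since $X$ is connected, $H$ is already irreducible, so your $H_0$ is simply $H$; and the claim that $H_0$ is pure of dimension $\dim Y$ with generically bijective first projection follows because $H_0\cap\bigl((Y\setminus U)\times Y\bigr)$ is a proper analytic subset of the irreducible $H_0$, so the graph of $\tau(f)|_U$ is open and dense in $H_0$.
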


\begin{proof}   Consider the  meromorphic map $g_f:=p_Z\circ f :X\to Y.$  It is holomorphic (\cite[Proposition 1]{Fu80})  since $Y$ has no rational curves. Let $U\subset Y $ be  a dense  Zariski open subset of $Y$ such   that $ (p_X^{-1}(U),p_X,U)$   is a
 holomorphically locally trivial fiber   bundle. Take a  fiber $P_u=p_X^{-1}(u), u\in U.$  Since $g_f$ is holomorphic, image $g_f(P_u)$ may be either   a point or a rational curve. Since $Y$ contains no rational curves,  the restriction  map
\begin{equation}\label{constant}g_f \bigm |_{P_u} : \ P_u\to Y \ \text{is a constant map}. \end{equation}   Since  $U$ is dense  and the set of points  $y\in Y$ such that $g_f \bigm |_{P_y}$ is a constant, is closed, we get that  $g_f \bigm |_{P_y}$ is constant  for any $y\in Y.$ Put $\tau(f)(y):=g_f \bigm |_{P_y}.$

For  a  fiber  $P_u$ with   $u\in U$, there exists an open  neighborhood 
$W$ of  $u$ in $U$  such that $V=p_X^{-1}(W)$ is $p_X, p_1-$fiberwise isomorphic to $ W\times\BP^1_{(x:y)},$ where
$p_1$ stands for the natural projection to the second factor.
  Then  for $w\in W$ we have $\tau(f)(w)=p_Z\circ f(w, (0:1) ),$ hence is a
 holomorphic function on $w.$  Thus,   $\tau(f)$
is holomorphic on $U,$    defined and continuous on $Y.$

Let $y\in Y\setminus U$  and $z=\tau(f)(y)$.   Let us choose open neighbourhoods  $W_y, W_z,\subset Y$ of $y,z$ respectively such that 
\begin{itemize}
\item
both $W_y$ and $ W_z$   are biholomorphic to an open ball in $\BC^n$  with induced coordinates 
$y_1,\dots,y_n$  and $z_1,\dots,z_n$  respectively;
\item
$\tau(f)(W_y)\subset W_z.$
\end{itemize}

Then the induced functions $\tau(f)^*(z_i)$ are holomorphic in $W_y\cap U,$ defined      and locally bounded   in $W_y$ thus, by the first Riemann continuation Theorem   (\cite[Chapter 1, C, 3] {GR}, \cite[Section 2.23]{Fi}) are holomorphic in $W_y.$  Hence, $\tau(f)$ is a holomorphic map. \end{proof}

\begin{Corollary}\label{tauh} For an equidimensional  rational   bundle $(X,p,Y)$ over a meromorphically hyperbolic (complex connected compact) manifold $Y$ there is a  group homomorphism  $\tau:\Bim(X)\to \Aut(Y)$  such that $$p\circ f=\tau(f)\circ p$$ for every $f\in\Bim(X).$
Thus every $f\in\Bim(X)$ is $p-$fiberwise.
\end{Corollary}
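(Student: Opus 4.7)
The plan is to deduce this corollary directly from \lemref{tau} by applying it to bimeromorphic self-maps of $X$ and to their inverses, and then verifying that the resulting assignment $\tau$ is a group homomorphism.

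First I would observe that any $f\in\Bim(X)$ is, in particular, a surjective meromorphic map $X\dashrightarrow X$, so applying \lemref{tau} to the pair $(X,p,Y)$, $(X,p,Y)$ produces a holomorphic self-map $\tau(f):Y\to Y$ fitting in the commutative square $p\circ f=\tau(f)\circ p$. Applying the same lemma to the bimeromorphic inverse $f^{-1}\in\Bim(X)$ produces a holomorphic $\tau(f^{-1}):Y\to Y$ with $p\circ f^{-1}=\tau(f^{-1})\circ p$. Composing these two relations on a dense open subset where both $f$ and $f^{-1}$ are biholomorphic gives
\[
\tau(f)\circ\tau(f^{-1})\circ p=p=\tau(f^{-1})\circ\tau(f)\circ p,
\]
and since $p$ is surjective this forces $\tau(f)\circ\tau(f^{-1})=\mathrm{id}_Y=\tau(f^{-1})\circ\tau(f)$, whence $\tau(f)\in\Aut(Y)$.

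Next I would check that $\tau(f)$ is uniquely determined by the equation $p\circ f=\tau(f)\circ p$: surjectivity of $p$ means that any two holomorphic maps $Y\to Y$ agreeing after precomposition with $p$ must coincide. With uniqueness in hand, the multiplicativity $\tau(f\circ g)=\tau(f)\circ\tau(g)$ follows from the computation
\[
p\circ(f\circ g)=(\tau(f)\circ p)\circ g=\tau(f)\circ(\tau(g)\circ p)=(\tau(f)\circ\tau(g))\circ p,
\]
combined with uniqueness applied to $\tau(f\circ g)$. Similarly $\tau(\mathrm{id}_X)=\mathrm{id}_Y$, so $\tau:\Bim(X)\to\Aut(Y)$ is a group homomorphism. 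By construction every $f\in\Bim(X)$ is $p$-fiberwise in the sense of \defnref{fiberwise}.

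The only substantive ingredient is \lemref{tau}, which already contains all of the geometric content (non-existence of rational curves in $Y$ forces general fibers to collapse, and the Riemann continuation theorem extends holomorphy across the bad locus); so the main obstacle here is merely the bookkeeping to pass from a single meromorphic map to a group homomorphism, which is handled by the uniqueness argument above.
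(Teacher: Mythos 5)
Your proposal is correct and follows essentially the same route as the paper, which states the corollary as an immediate consequence of \lemref{tau} without writing out a separate argument: one applies the lemma to each $f\in\Bim(X)$ (and to $f^{-1}$), and the invertibility of $\tau(f)$ together with multiplicativity follows, exactly as you do, from the uniqueness of $\tau(f)$ forced by surjectivity of $p$ and density/continuity. Your write-up simply makes explicit the routine bookkeeping the authors leave to the reader.
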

\begin{remark}\label{Fu} If, in addition,  $Y$ is K\"{a}hler, then  $\tau(\Aut_0(X))$ has the natural meromorphic structure and 
the group homomorphism $$\tau \bigm |_{\Aut_0(X)}:\Aut_0(X)\to\tau(\Aut_0(X))$$   is a meromorphic map, in particular, 
$\tau$ is 
a holomorphic homomorphism of complex Lie groups and  $\tau(\Aut_0(X))$ is a complex Lie subgroup of $\Aut(Y)$(A. Fujiki, \cite[Lemma 2.4, 3)]{Fu78}).\end{remark}

\begin{Proposition}\label{p1bundle}  Let $(X,p,Y)$ be an equidimensional rational   bundle. Assume that $Y$ contains no
analytic subsets  of codimension   1.    Then $(X,p,Y)$  is a $\BP^1-$bundle.\end{Proposition}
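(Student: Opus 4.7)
The plan is to show that every fiber of $p$ is a smooth $\BP^1$; once this is established, Fischer-Grauert's theorem upgrades the map to a holomorphic $\BP^1$-bundle. Let $C\subset X$ denote the analytic locus of points at which $dp$ is not surjective, and set $\Sigma=p(C)\subset Y$. Since $X$ is compact, $p$ is proper, and Remmert's proper mapping theorem makes $\Sigma$ analytic in $Y$. Over $Y\setminus\Sigma$ the map $p$ is a proper submersion with compact connected smooth one-dimensional fibers; the genus of such a fiber is locally constant on the connected open set $Y\setminus\Sigma$ and coincides with the generic value $0$, so every fiber there is a smooth $\BP^1$, and $(p^{-1}(Y\setminus\Sigma),p,Y\setminus\Sigma)$ is a holomorphic $\BP^1$-bundle by Fischer-Grauert. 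It suffices to prove $\Sigma=\emptyset$.

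Assume, for contradiction, that $\Sigma\neq\emptyset$. By the hypothesis on $Y$, the proper analytic subset $\Sigma$ satisfies $\codim_Y\Sigma\ge 2$. Because $X$ and $Y$ are smooth and $p$ is equidimensional with fibers of dimension $\dim X-\dim Y=1$, the analytic version of the miracle flatness theorem makes $p$ flat; in a flat family of curves the arithmetic genus of every fiber equals the generic value $0$. Pick $x_0\in C$ with $y_0=p(x_0)\in\Sigma$: the fiber $p^{-1}(y_0)$ is a connected Cohen-Macaulay curve of arithmetic genus $0$ sitting inside the smooth manifold $X$, which at $x_0$ is either singular or generically non-reduced along a smooth branch. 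In the singular case, a connected Cohen-Macaulay curve of arithmetic genus $0$ is a tree of smooth rational curves meeting transversely at nodes, so $x_0$ is necessarily such a node where two branches cross.

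The final step is the analytic local normal form for $p$ at $x_0$. For a nodal fiber, the versal deformation of the plane node $uv=0$ is the one-parameter family $uv=t$, so flatness yields local holomorphic coordinates $(u,v,w_1,\ldots,w_{n-1})$ on $X$ at $x_0$ and $(y_1,\ldots,y_n)$ on $Y$ at $y_0$ in which
\[
 p(u,v,w_1,\ldots,w_{n-1}) = (uv,w_1,\ldots,w_{n-1}).
\]
The critical locus $\{u=v=0\}$ of this map has $p$-image equal to the codimension-$1$ hyperplane $\{y_1=0\}\subset Y$. For a fiber of multiplicity $m\ge 2$ along a branch through $x_0$, an analogous argument gives coordinates in which $p$ takes the form $(x_1,\ldots,x_{n+1})\mapsto(x_1^m,x_2,\ldots,x_n)$, whose critical image is again $\{y_1=0\}$. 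In either case $\Sigma$ contains a codimension-$1$ analytic subset, contradicting the hypothesis. Thus $\Sigma=\emptyset$, every fiber of $p$ is a smooth $\BP^1$, and Fischer-Grauert concludes that $(X,p,Y)$ is a $\BP^1$-bundle. The principal technical obstacle is the last step, namely the rigorous passage from flatness and the qualitative description of the singular fiber to the explicit analytic normal form; one can either invoke the versal deformation theory of the node directly, or cite the classical result that the discriminant locus of a flat family of arithmetic-genus-$0$ curves over a smooth base is pure of codimension $1$ whenever it is non-empty.
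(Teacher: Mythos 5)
Your overall skeleton is exactly the paper's: form the critical set $C$, note that $\Sigma=p(C)$ is analytic by properness, show that a nonempty $\Sigma$ would have to contain a codimension-one analytic subset of $Y$, contradict the hypothesis, and then invoke Fischer--Grauert. The difference is that the paper gets the codimension-one statement in one stroke by citing Ramanujam's purity theorem \cite{Ra} (Dolgachev \cite{Dol} in the algebraic case): for a proper flat family whose general fiber is the Riemann sphere, the image of the non-submersive locus is pure of codimension $1$. Your closing sentence, offering to ``cite the classical result that the discriminant locus of a flat family of arithmetic-genus-$0$ curves is pure of codimension $1$,'' is therefore literally the paper's proof; on that reading your argument is correct and essentially identical.

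The in-house replacement you sketch instead of that citation has genuine gaps. First, ``a connected Cohen--Macaulay curve of arithmetic genus $0$ is a tree of smooth rational curves meeting transversely at nodes'' is false as stated: the three coordinate axes in $\BC^3$ form a connected Cohen--Macaulay curve with $p_a=0$ and a non-nodal triple point. To exclude such seminormal points with $\ge 3$ branches you must use that the fibers here are local complete intersections (fibers of a holomorphic map between manifolds of dimensions $n+1$ and $n$), hence Gorenstein --- a hypothesis you never invoke. Second, flatness alone does not yield the normal form $p(u,v,w)=(uv,w)$: versality of the node only gives, locally, $X\cong\{uv=\phi(y)\}$ for some holomorphic germ $\phi$ on $(Y,y_0)$, and $\phi$ need not be a submersion; you must either use smoothness of $X$ to force $d\phi(y_0)\neq 0$, or, more simply, observe that the local discriminant is $\{\phi=0\}$, which is already a hypersurface because $\phi\not\equiv 0$ (generic fibers are smooth). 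Third, the non-reduced case is only asserted (``an analogous argument''), and your dichotomy at an arbitrary $x_0\in C$ is not exhaustive: at a point where a multiplicity-$m$ component meets another component of the fiber, neither of your two normal forms applies, so you should choose $x_0$ generically (a node of a reduced fiber, or a generic point of a multiple component) and say so. All of these are repairable, but as written the self-contained route does not go through, whereas the citation route --- the one the paper takes --- does.
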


\begin{proof}  Let   $\dim  Y=n, $  and $$S=\{x\in X   \ | \  \rk (dp)(x)<n\}$$  be the set of all points in $X$
where the differential $dp$  of $p$ does not have the maximal rank. Then $S$ and $\tilde S=p(S)$ are analytic subsets of $X$
and  $ Y,$ respectively  (see, for instance,
\cite [Theorem 2, Chapter  VII]{Narasimhan},
 \cite[Therem  1.22]{PR}, \cite{Re}).  Moreover, $\codim(\tilde S)=1 $ (\cite{Ra}).  Since 
$Y$ contains no
analytic subsets of codimension  \ 1, we obtain: $\tilde S=\emptyset.$  Thus the holomorphic map $p$ has no singular fibers. \end{proof}

\begin{remark}\label{compana}We used the  following theorem of C.P. Ramanujam (\cite{Ra}).   Let $X$ and $Y$ be connected complex manifolds, $f:X\to Y$ a proper flat holomorphic map  such that the general fiber is Riemann sphere, $D$ the set of point in $X$ such that $df$ is not of maximal rank  and $E=f(D).$  Then $E$ is pure of codimension 1 in $Y.$

In the algebraic case this result was proven by Igor  Dolgachev, \cite{Dol}.  \end{remark}

  Consider now  a  $ \BP^1$-bundle  over  a compact complex   connected   manifold $Y,$  i.e.,  a  triple $(X,p,Y) $ such that 
  $X$ is  a   holomorphically locally trivial fiber bundle over $Y$ with  fiber  $\BP^1$  and 
with the  corresponding projection $p:X\to Y.$ 
 Let us fix some notation.

{\bf Notation  and Assumptions 2.}

\begin{itemize}
\item   $P_y$ stands for the fiber $p^{-1}(y).$
\item We call the  covering $Y=\cup U_i$   by open subsets of $Y$  {\it fine} if  for every $i$      there exist an isomorphism  $ \phi_i$ of  $ V_i=\pi^{-1}(U_i)$ to direct product $ U_i$ and $\BP^1_{(x_i:y_i)}$ that is compatible with the natural projection $\mathrm  {pr}$ on the second factor (i.e., $p,\mathrm  {pr}-$fiberwise).
  In other words $ V_i\subset X$ stands for $p^{-1}(U_i):$ we have an induced isomorphism $\phi_i:V_i\to  U_i\times \BP^1_{(x_i:y_i)} $  and $(y,(x_i:y_i))$  are coordinates in $V_i;$
 
 In $(U_i\cap  U_j)\times\BP^1_{(x_i:y_i)}$
defined is a  holomorphic  map $\Phi_{i,j}=(id, A_{i,j}(y)):$
$$(y,(x_i:y_i))\to(y, (x_j:y_j))$$ such that 
\begin{itemize}
\item $A_{i,j}\in\PGL(2,\BC)$  with representative $$\tilde  A_{i,j}(t)=\begin{bmatrix}a_{i,j}(t) &b_{i,j}(t)\\c_{i,j}(t) &d_{i,j}(t)\end{bmatrix}\in  \GL(2,\BC);$$
\item $ (x_j:y_j)=A_{i,j}(y)((x_i:y_i))=((a_{i,j}(y)x_i +b_{i,j}(y)y_i):(c_{i,j}(y)x_i+d_{i,j}(y)y_i));$
\item   $A_{i,j}(y)= A_{j,i}(y)^{-1};$
\item the following    diagram commutes:
\begin{equation}\label{diagram1}
\begin{CD}
V_i  \cap  V_j @>{ \phi_i}>> (U_i\cap  U_j)\times \BP^1_{x_i:y_i}  \\
@V id VV @V\Phi_{i,j} VV \\
 V_i  \cap  V_j @>{ \phi_j}>> (U_i\cap  U_j)\times \BP^1_{x_j,y_j} 
\end{CD}.
\end{equation}.
\item  $A_{i,j}(y) $ depend holomorphically   on $y$ in  $U_i\cap  U_j;$ 
\item $A_{i,j}(y) A_{j,k}(y) = A_{i,k}(y).$
\end{itemize}\end{itemize}

 \begin{Lemma}\label{extension}  Let $Z\subset Y$  be an analytic subset of $Y$  with $\codim Z\ge 2, $ and  $\tilde Z:=p^{-1}(Z)\subset X.$ Let  $ f\in \Bim (X)$   be $p-$fiberwise.    If  $f$ is defined at every point $x\in X \setminus\tilde Z$  
 then $f\in \Aut(X).$ 
\end{Lemma}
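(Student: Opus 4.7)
The plan is to work with the fine trivializing covering $\{U_i\}$ of $Y$ fixed in Notation and Assumptions 2, where $V_i=p^{-1}(U_i)\cong U_i\times\BP^1_{(x_i:y_i)}$. Since $f$ is $p$-fiberwise and holomorphic on $V_i\setminus\tilde Z$, on this open set it must take the form
\[
(y,(x_i:y_i))\mapsto\bigl(y,\,M_i(y)(x_i:y_i)\bigr)
\]
for some holomorphic $M_i\colon U_i\setminus Z\to\PGL(2,\BC)$. Thus the whole question reduces to extending every such $M_i$ holomorphically across $Z$.

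The crucial observation is that $\PGL(2,\BC)$ is an affine algebraic group, and hence embeds as a \emph{closed} algebraic subvariety of some $\BC^N$ (for instance, via the adjoint representation $\PGL(2,\BC)\hookrightarrow\GL(3,\BC)$, followed by the graph of $1/\det$). Under such an embedding, $M_i$ becomes an $N$-tuple of holomorphic functions on $U_i\setminus Z$. Because $U_i$ is a smooth complex manifold and $\codim_{U_i}Z\ge 2$, the second Riemann continuation (Hartogs) theorem produces a holomorphic extension $\tilde M_i\colon U_i\to\BC^N$ of $M_i$. The image of $\tilde M_i$ is contained in the closure of $M_i(U_i\setminus Z)\subset\PGL(2,\BC)$, and the closedness of $\PGL(2,\BC)$ in $\BC^N$ then forces $\tilde M_i(U_i)\subset\PGL(2,\BC)$.

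The local extensions $\tilde M_i$ define holomorphic maps $\tilde f|_{V_i}\colon V_i\to V_i$ extending $f|_{V_i\setminus\tilde Z}$. On overlaps $V_i\cap V_j$ they agree on the dense open $(V_i\cap V_j)\setminus\tilde Z$, hence everywhere by the identity principle, and so glue to a global holomorphic map $\tilde f\colon X\to X$ extending $f$. Since each $\tilde M_i(y)\in\PGL(2,\BC)$ is invertible, $\tilde f$ is a fiberwise isomorphism of the $\BP^1$-bundle, and the holomorphically varying inverses $\tilde M_i(y)^{-1}$ assemble into a global holomorphic inverse of $\tilde f$; therefore $\tilde f\in\Aut(X)$, which is the desired conclusion. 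The only genuine obstacle is the extension of $M_i$ across $Z$, and the essential move is to realize $\PGL(2,\BC)$ as a closed affine subvariety of $\BC^N$ so that the classical Hartogs extension theorem applies componentwise; the remaining gluing and bijectivity steps are formal bookkeeping with the trivialization.
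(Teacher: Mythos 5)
Your core extension device --- embedding $\PGL(2,\BC)$ as a closed affine subvariety of some $\BC^N$, extending the matrix-valued map componentwise across the codimension $\ge 2$ set by the second Riemann (Hartogs) theorem, and using closedness of the image to conclude that the extension still lands in $\PGL(2,\BC)$ --- is in substance the same as the paper's argument, which invokes Levi's continuation theorem for holomorphic maps into the affine variety $\PSL(2,\BC)$; that part of your proposal is sound.

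There is, however, a genuine gap at the very first step. You write $f(y,(x_i:y_i))=(y,M_i(y)(x_i:y_i))$, i.e.\ you assume that $f$ maps each fiber $P_y$ to itself and each chart $V_i$ into itself. But the hypothesis is only that $f$ is $p$-fiberwise, which by \defnref{fiberwise} means $p\circ f=\tau(f)\circ p$ for some holomorphic $\tau(f):Y\to Y$ that need not be the identity; and in the application of the lemma (\corref{bimaut}, proving $\Bim(X)=\Aut(X)$) the map $f$ is an arbitrary element of $\Bim(X)$, which in general permutes the fibers (for instance it can cover a translation of the base torus). As written, your argument only proves the statement for the kernel $\Bim(X)_p$ of $\tau$, not for general $p$-fiberwise $f$. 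The repair is exactly the paper's device: first observe that $g:=\tau(f)$ is biholomorphic (since $f^{-1}$ is $p$-fiberwise as well and $\tau$ is functorial); then, near a point of $Z$, choose a neighbourhood $W\subset U_i$ with $g(W)\subset U_j$ and write $f(t,(x_i:y_i))=\bigl(g(t),\psi(t)(x_i:y_i)\bigr)$, where $\psi$ is holomorphic on $W\setminus Z$ with values in $\PGL(2,\BC)$, expressed in the chart-$j$ coordinates on the target. Your Hartogs argument then extends $\psi$ over $W$, and the local extensions $(t,\xi)\mapsto\bigl(g(t),\tilde\psi(t)\xi\bigr)$ glue by the identity principle and are invertible (via $g^{-1}$ and $\tilde\psi(t)^{-1}$) exactly as in your final paragraph.
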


\begin{proof}  

Let   $ \{U_i\}$ be a fine covering of $Y$.    Since $f^{-1}\in\Bim (X)$ is $p-$fiberwise as well, $g:=\tau(f)$ 
is a biholomorphic map. 
  Let $z \in \tilde Z$ and $W\subset U_i$  be an  open neighborhood of $p(z)$ such that $g(W)\subset U_j$ for some $j.$ Let  $B:=W\cap Z,   A:=W\setminus B.$  For every  $t\in   A $ the restriction $f\bigm  |_{P_t}$ is  an isomorphism  of  $P_t$ with $P_{g(t)}$  defined in corresponding coordinates by an element of $\PSL(2,\BC). $  Thus, we have a holomorphic map 
  $$\psi_{W,f}:A\to \PSL(2,\BC)$$  such that 
  $$ f(t, (x_i:y_i))=(g(t),\psi_{W,f}(t)((x_i:y_i))). $$
       Since $\PSL(2,\BC) $  is an affine variety,   and $\codim B\ge 2,$   by  the  Levi's continuation Theorem
(\cite{Levi}, see also \cite[Theorem 4 Chapter   VII]{Narasimhan} or \cite[Section 4.8]{Fi})
  there exists a holomorphic extension $\tilde \psi_{W,f}:W\to \PSL(2,\BC).$ 
  We define $$\tilde f(t, (x_i:y_i))=(g(t),\tilde\psi_{W,f}(t)((x_i:y_i))) \quad  in \quad p^{-1}(W). $$  Thus we can extend $f$ holomorphically at  any point $z\in\tilde Z.$

  Since  outside $\tilde Z$  all the extensions coincide, 
  this global extension is uniquely defined. \end{proof}

\begin{Definition}\label{sectionn} 
 A $n-$section $S$ of $p$ is a  codimension 1  analytic subset $D\subset X$  such that the intersection $X\cap P_y$  is finite for every $y\in Y$ and  consists in $n$ distinct  point for the general $y\in Y.$  A bisection is a $2-$section.
 A section  $S$  of $p$ is a $1$-section.\end{Definition}

\begin{remark}\label{s1}
  For a section $S$ of $p$   there is a holomorphic map  $\sigma:Y\to X$  such that  section  $S=\sigma(Y)$ and $p\circ\sigma=id$  on $Y.$   In every $U_i$ the map  $\sigma $  is defined  by a function $\sigma_i:U_i\to V_i$ such that 
\begin{equation}\label{sigma}
 A_{i,j}(t)\circ \sigma_i(y)=\sigma_j(t)
\end{equation}
for $t\in U_i\cap  U_j.$ \end{remark}

\begin{Lemma}\label{section}  If $Y$ contains no analytic subsets of codimension 1, then \begin{enumerate}\item
any two distinct   sections  of $p$ in $X$ are disjoint;\item a $n-$section has no ramification  points (i.e the intersection $X\cap P_y$  consists in $n$ distinct  point for every $y\in Y$)\end{enumerate} \end{Lemma}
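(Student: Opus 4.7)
The plan is to derive both parts from a single principle: if an analytic subset $Z\subset Y$ is locally described as the zero set of a holomorphic function that is not identically zero, then $Z$ is pure of codimension $1$ (or empty), and the hypothesis on $Y$ excludes this. The key in each case will be to arrange the local setup so that the relevant locus on $Y$ is defined by one holomorphic equation on an open subset of $Y$.

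For part (1), let $\sigma_1,\sigma_2:Y\to X$ be two sections with $\sigma_1(Y)\neq\sigma_2(Y)$, and let $Z=\{y\in Y:\sigma_1(y)=\sigma_2(y)\}$. I would work on a fine covering $\{U_i\}$ as in Notation 2. Near any $y_0\in Z$, one can rechoose the affine coordinate $z$ on $\BP^1$ so that neither $\sigma_1(y_0)$ nor $\sigma_2(y_0)$ equals $\infty$; after shrinking, $Z$ is cut out in a neighborhood of $y_0$ by the single holomorphic equation $\sigma_{1,i}(y)-\sigma_{2,i}(y)=0$. If this function vanishes identically on some $U_i$ meeting $Z$, then by the identity theorem and connectedness of $Y$ one has $\sigma_1\equiv\sigma_2$, contradicting distinctness. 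Otherwise the vanishing locus is locally pure of codimension $1$, producing a forbidden codimension $1$ analytic subset of $Y$. Hence $Z=\emptyset$.

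For part (2), let $D\subset X$ be an $n$-section. Since $D$ is a codimension $1$ analytic subset meeting each fiber $P_y\cong\BP^1$ in a finite set, $D$ is a divisor in $X$ whose restriction to each fiber is a nonzero effective divisor on $P_y$. Computing its degree on a general fiber yields $n$, and as this degree is constant in $y$, one has $\sum_{x\in D\cap P_y}\mathrm{mult}_x(D\cdot P_y)=n$ for all $y$, so in particular $|D\cap P_y|\le n$ everywhere. Near any $y_0\in Y$, choose a fine trivialization over $U_i\ni y_0$ and an affine coordinate $z$ on $\BP^1$ so that none of the finitely many points of $D\cap P_{y_0}$ is $\infty$. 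After shrinking, $D$ is cut out in $U_i\times\BP^1$ by a single equation $f(y,z)=0$ with $f$ holomorphic in $(y,z)$ and polynomial of degree exactly $n$ in $z$ with non-vanishing leading coefficient. The ramification locus $R=\{y\in Y:|D\cap P_y|<n\}$ is locally the zero set of the discriminant $\Delta(y)$ of $f(y,\cdot)$ as a polynomial in $z$. Since $|D\cap P_y|=n$ for generic $y$, $\Delta$ is not identically zero, so the one-equation principle shows $R$ is locally pure of codimension $1$ near any point of $R$. The hypothesis on $Y$ then forces $R=\emptyset$.

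The main delicate point I expect is in (2): arranging the local coordinates so that $D$ is genuinely defined by one polynomial of the correct degree in the fiber variable, which requires both the rechoice of $\infty$ on $\BP^1$ and the bound $|D\cap P_y|\le n$ for every $y$ coming from the divisor-degree argument. Once this polynomial description is in hand, the discriminant identifies $R$ with the zero locus of a single holomorphic function, and the proof closes cleanly. Part (1) is essentially the degree one case of the same pattern.
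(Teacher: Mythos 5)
Your proposal is correct in substance, and while your part (1) is essentially the paper's argument in coordinates (the paper projects $S_1\cap S_2$, which has codimension at most $2$ in $X$, down by the biholomorphism $p\bigm|_{S_1}$ and gets a proper analytic subset of $Y$ of codimension at most $1$, hence empty; you cut out the same coincidence locus locally by the single equation $\sigma_{1,i}-\sigma_{2,i}=0$, with the identically-vanishing alternative disposed of by an open-closed identity-theorem argument, and you should note that this locus is globally analytic, e.g.\ as $p(S_1\cap S_2)$, so that the hypothesis on $Y$ applies), your part (2) is a genuinely different route. The paper simply quotes a purity theorem for finite holomorphic maps: the set $A\subset D$ where $p\bigm|_D$ is not locally biholomorphic has image $p(A)$ empty or of pure codimension $1$ in $Y$ (references to Dethloff--Grauert, Peternell, Remmert), whence $A=\emptyset$ and $p\bigm|_D$ is an unramified covering with constant fiber count $n$. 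You instead prove this purity by hand: constancy of the fiber degree of the divisor $D$ gives $|D\cap P_y|\le n$ for every $y$, locally $D$ is the zero set of a monic degree-$n$ pseudopolynomial in the fiber coordinate, and the ramification locus is the zero set of its discriminant, a single holomorphic function that is not identically zero. This is more self-contained and makes the codimension-one mechanism explicit; the cost is that the step you yourself flag as delicate, the existence of the degree-$n$ defining polynomial with holomorphic coefficients, is exactly the canonical defining equation of a finite analytic cover (elementary symmetric functions of the sheets plus the Riemann extension theorem; see e.g.\ Chirka or Whitney) and needs a citation or proof, and that both the claim that its degree is $n$ and the claim $\Delta\not\equiv 0$ use the density of the set of $y$ with $n$ distinct points in the fiber, which is legitimate under the paper's reading of ``general'' but should be stated. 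With that citation and that one sentence added, your proof closes: the paper's version buys brevity by outsourcing purity, yours buys transparency at the price of the analytic-cover lemma.
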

\begin{proof}

(1)  If  a section $S=\sigma(Y)$   meets  a  section $R=\rho (Y)$  then  the intersection $S\cap R$ is either empty or has codimension $2$ in $X.$
Since none of sections contains a fiber,  $p(S\cap R)$  is either empty or has codimension 1 in $Y.$
Since $Y$  carries no  analytic subsets of codimension 1,  
 $p(S\cap R)=\emptyset.$ 

(2) Let $R$ be an $n-$section   of $p,$ let $A$ be the set of all points  $x\in R$  where the restriction $p \bigm  |_R:R\to Y$ of $p$ onto $R$ is not locally biholomorphic.  Then 
the image   $p(A)$  is either empty or  has pure codimension 1  in $Y$(\cite[Section 1,  9]{DG}, \cite[Theorem1.6]{Pe}, \cite{Re}). 
Since $Y$  carries no  analytic subsets of codimension 1,  $p(A) =\emptyset.$ Hence, $A=\emptyset.$
\end{proof}

 \section {$ \BP^1$-bundles  over poor manifolds.}\label{p1}

We now fix a poor complex  manifold $T$ and 
 consider  a  $ \BP^1-$bundle  over $T,$  i.e., a triple $(X,p,T)$ such that 

\begin{itemize}\item $X$  and  $T$ are connected compact complex manifolds;
\item  $T $  contains neither  a  rational curve  nor  an analytic subspace  of $\codim \  1,$ and   algebraic dimension $a(T)=0;$
\item  $X$ is  a   holomorphically locally trivial fiber-bundle over $T$ with  fiber  $\BP^1$  and 
with the  corresponding projection  map  $p:X\to T.$ \end{itemize}

\begin{Corollary}\label{bimaut} $\Bim(X)=\Aut(X).$\end{Corollary}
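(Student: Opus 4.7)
The plan is to combine the equivariance provided by Corollary~\ref{tauh} with the extension Lemma~\ref{extension}, after checking that the indeterminacy of any bimeromorphic self-map of $X$ projects to a sufficiently small analytic subset of $T$.

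First, take any $f \in \Bim(X)$; the reverse inclusion $\Aut(X)\subset\Bim(X)$ is trivial. Since $T$ is poor, it contains no rational curves, so Corollary~\ref{tauh} applies and yields an automorphism $\tau(f) \in \Aut(T)$ with $p\circ f = \tau(f)\circ p$. Thus $f$ is $p$-fiberwise in the sense required by Lemma~\ref{extension}, and what remains is to show that $f$ is holomorphic outside the preimage of a codimension-$\geq 2$ analytic subset of $T$.

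Next, let $I(f)\subset X$ be the indeterminacy locus of the meromorphic map $f:X\dashrightarrow X$. By a standard fact for meromorphic maps between smooth compact complex manifolds, $I(f)$ is an analytic subset of $X$ of codimension at least $2$, so
\[
\dim I(f)\leq \dim X - 2 = \dim T - 1.
\]
Set $Z:=p(I(f))\subset T$. Since $p$ is proper (the fibers are compact $\BP^1$'s over a compact base), Remmert's proper mapping theorem implies that $Z$ is an analytic subset of $T$. Moreover $\dim Z\leq \dim I(f)\leq \dim T-1$, so $Z\subsetneq T$ is a proper analytic subset. Because $T$ contains no analytic subsets of codimension $1$, we conclude $\codim_T Z\geq 2$.

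Finally, by construction $I(f)\subset p^{-1}(Z)=:\tilde Z$, so $f$ is defined at every point of $X\setminus\tilde Z$. Lemma~\ref{extension} then gives $f\in \Aut(X)$, proving $\Bim(X)=\Aut(X)$. The only real content of the argument is the codimension count combined with the poorness of $T$: without the hypothesis that $T$ carries no codimension-$1$ analytic subsets one could only conclude $\codim Z\geq 1$, which is too weak to invoke the Levi-type extension used in Lemma~\ref{extension}; that is the step that has to go right.
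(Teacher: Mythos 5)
Your argument is correct and follows essentially the same route as the paper: use the absence of rational curves in $T$ (via Corollary~\ref{tauh}) to make $f$ $p$-fiberwise, note the indeterminacy locus has codimension at least $2$ so its image in $T$ is an analytic set which must have codimension at least $2$ because $T$ has no codimension-$1$ analytic subsets, and then apply Lemma~\ref{extension}. The only cosmetic difference is that the paper also mentions applying the extension lemma to $f^{-1}$, which your reading of the lemma's statement makes unnecessary.
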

\begin{proof}
  Since $T$ contains no rational curves, by   \lemref{tau}  every $f\in\Bim(X)$ is $p-$fiberwise.  For $f\in \Bim(X)$ let $\tilde S_f$ be the  indeterminancy locus of $f$  that is an analytic subspace of $X$ of codimension at least 2 (\cite[page 369]{Re}). 
  Let   $S_f={p(\tilde S_f)},$ which is an analytic subset of $Y$ (\cite{Re}, \cite [Theorem 2, Chapter  VII]{Narasimhan},).
Since $T$ 
contains  no analytic subsets of codimension 1, $\codim  S_f\ge 2.$  Moreover, $f$ is defined at   all points   of  $X\setminus p^{-1}(S_f).$ 
 By \lemref{extension} both  $f\in \Bim(X)$  and  $f^{-1}\in \Bim(X)$ may be holomorphically extended to  $X,$   hence we get 
$ \Bim(X)=\Aut(X).$\end{proof}

Recall that by $\Aut(X)_p$ we denote the kernel of   group  homomorphism $\tau:\Bim(X)=\Aut(X)\to \Aut(T),$  i.e., the subgroup of automorphisms   of $X$ that leave every fiber of $p$ invariant. 

   Let $f\in\Aut(X)_p, f\ne id.$  By Lemma 3.1 of   \cite{kostya} we know that the set of fixed points of $f$ is a divisor  in $X.$  The following consideration shows that this divisor  is either a smooth section of $p,$ or a union of two disjoint sections of $p,$ or a smooth $2-$section.

\begin{Proposition}\label{fixedpoints}  Assume that $X\not\sim T\times\BP^1.$  Let $ f\in\Aut(X)_p, f\ne id,$ and 
 let $S$ be the set of all  fixed points  of $f.$
Then one of  three following cases  holds :\begin{itemize}\item[{\bf  A.}] $S=S_1\cup S_2 $  is a union of two  disjoint sections  $S_1$ and $S_2$ of $p;$
\item[{\bf  B.}] $S$ is a section of $p;$
\item[{\bf  C.}] $S$ is a  $2-$section of $p$ (meeting every fiber at two  distinct 
 points).\end{itemize} 
 \end{Proposition}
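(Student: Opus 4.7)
The plan is to exploit the conjugation-invariant quantity $\mathrm{DT}(f|_{P_t}) = \tr^2/\det$. By definition $\mathrm{DT}$ is invariant under both conjugation and rescaling of the matrix representative, so the local expressions $\mathrm{DT}(\psi_i(t))$ coming from the transition rule $\psi_j = A_{i,j}\psi_i A_{i,j}^{-1}$ glue to a globally defined holomorphic function on $T$. Since $T$ is compact and connected, this function is constant; denote its value by $c$. All restrictions $f|_{P_t}$ are therefore of the same ``type'' in $\PGL(2,\BC)$, modulo the identity case.

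If $c \ne 4$, every $f|_{P_t}$ has two distinct fixed points, so $S$ meets each fiber in exactly two distinct points and is a $2$-section of $p$ in the sense of \defnref{sectionn}. \lemref{section}(2) ensures that $S$ is unramified over $T$, and \lemref{section}(1) ensures that any two distinct sections are disjoint. Hence if $S$ is connected we obtain Case {\bf C}, while if $S$ is disconnected its two components must each be a section (one point per fiber), giving Case {\bf A}.

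If $c = 4$, every $f|_{P_t}$ is either the identity or a non-identity unipotent element with a unique fixed point. Writing $\psi_i(t) = [I + \tilde N_i(t)]$ in a local trivialization (with $\tilde N_i(t)$ nilpotent), the set $T_0 = \{t \in T : f|_{P_t} = id\}$ is locally cut out by $\tilde N_i = 0$ and is therefore analytic in $T$. Since $f \ne id$ we have $T_0 \subsetneq T$, and since $T$ contains no codimension-$1$ analytic subsets, $\codim T_0 \ge 2$. Over $T \setminus T_0$ the unique fixed point of $f|_{P_t}$ defines a holomorphic section $\sigma$ of $p$ whose image is contained in $S$.

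The main obstacle is ruling out $T_0 \ne \emptyset$, since otherwise the closure of $\sigma(T \setminus T_0)$ set-theoretically swallows the whole fibers $P_{t_0}$ for $t_0 \in T_0$, violating the finite-intersection requirement of \defnref{sectionn}. To exclude this I would use the identity $\tilde N_i^2 = 0$ to patch the canonical logarithms $\log(I + \tilde N_i) = \tilde N_i$ into a global vertical holomorphic vector field $v_f$ on $X$ whose flow is a $\BC^+$-action having $f$ as its time-$1$ map and vanishing identically on $p^{-1}(T_0)$. The meromorphic extension of $\sigma$ through $T_0$ (available by Hartogs-type extension since $\codim T_0 \ge 2$) together with this $\BC^+$-action would realize $X$ as the projective completion of a line bundle on $T$ trivialized by the infinitesimal $\BC^+$-action, hence $X \sim T \times \BP^1$, contradicting the hypothesis. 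Making this triviality step rigorous---while properly handling the indeterminacy of $\sigma$ over $T_0$ and verifying that the normal bundle of the resulting section is trivial---is the chief difficulty. Once $T_0 = \emptyset$ is established, $\sigma$ is a global holomorphic section and $S = \sigma(T)$ is a section, placing us in Case {\bf B}.
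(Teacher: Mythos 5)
Your handling of the global constancy of $\mathrm{DT}$ and of the case $\TD_f\neq 4$ (two distinct fixed points per fiber, hence an unramified double cover that is either a bisection or two disjoint sections) is essentially the paper's argument. The genuine gap is exactly where you place the ``chief difficulty'': in the case $\TD_f=4$ you never prove $T_0=\emptyset$, and the route you sketch is not only unproven but of doubtful validity. The ingredients you invoke --- a vertical holomorphic vector field whose flow is a $\BC^+$-action with $f$ as time-one map, together with a (meromorphic) section --- are all present in honest instances of Case {\bf B} with $X\not\sim T\times\BP^1$ (cf.\ \lemref{caseB}, where $\Aut(X)_p\supset\Gamma_B\cong\BC^+$ and the $\BA^1$-bundle $\AA_f$ is nontrivial, with no section), so no argument built only from those ingredients can conclude $X\sim T\times\BP^1$; as sketched, your contradiction mechanism would prove too much. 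Moreover the hypothesis $X\not\sim T\times\BP^1$ is not what is needed here: the paper eliminates $T_0$ using only the poorness of $T$.

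The missing idea is that $T_0$ (the paper's $R_f$) is, after a normalization of the fiber coordinate, the zero locus of a \emph{single} holomorphic function, hence of pure codimension $1$ if nonempty --- which is forbidden in a poor manifold. Concretely: from $\codim R_f\ge 2$ (your correct observation) and Levi's continuation theorem one extends the meromorphic function $g_i=\frac{2f_{i,12}}{f_{i,11}-f_{i,22}}$ across $R_f$ and sets $w_i=\frac{y_i}{x_i+y_i g_i(t)}$; in this coordinate $f$ acts as $w_i\mapsto w_i+a_i(t)$ with $a_i=\frac{2f_{i,21}}{\tr(\tilde F_i)}$ holomorphic (the trace cannot vanish because $\TD_f=4$ and $\det\tilde F_i\neq 0$), and $R_f\cap U_i=\{a_i(t)=0\}$. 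So a nonempty $R_f$ would be a codimension-one analytic subset of $T$, a contradiction; hence $R_f=\emptyset$ and $S=\{w_i=\infty\}$ is a section, giving Case {\bf B}. Your bound $\codim T_0\ge 2$, obtained from the three matrix equations, is too weak by itself to yield this contradiction; the coordinate change reducing the identity locus to one scalar equation is the step your proposal lacks.
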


\begin{proof}
Let $\{U_i\}$ be a fine covering of $T.$
 Let $id\ne f\in  \Aut(X)_p$  be defined (see { Notation and assumptions 2})   in $V_i$  with coordinates $(t,(x_i:y_i))$ by  $f_i(t,(x_i:y_i))=(t,F_i(t)(x_i:y_i)),$
where \begin{enumerate}
\item
$F_i(t)(x_i:y_i)=(f_{i,11}(t) x_i+f_{i,12}(t)y_i):f_{i,21}(t) x_i+f_{i,22}(t)y_i);$ 
\item  $\tilde F_i(t)=\begin{bmatrix}f_{i,11}(t) &f_{i,12}(t)\\f_{i,21}(t) &f_{i,22}(t)\end{bmatrix}$ 
represents  $F_i(t):=\psi_{U_i,f}(t)\in\PGL(2,\BC)$  (see proof of \lemref{extension}).

\item The set of fixed points of $F_i(t)$ is  the analytic subset of $X$ 
defined by equation\begin{equation}\label{fixed1}
 (f_{i,11}(t) x_i+f_{i,12}(t)y_i):(f_{i,21}(t) x_i+f_{i,22}(t)y_i)=(x_i:y_i),\end{equation} that is 
 
 \begin{equation}\label{fixed2}
               f_{i,12}(t)y_i^2+(f_{i,11}(t)-f_{i,22}(t))x_iy_i-f_{i,21}(t)x_i^2=0.\end{equation}
\end{enumerate}
 It is obviously an analytic subset of $X.$
 In every $U_i$ the  function $$\TD_i(t)=\TD(F_i(t))=\frac{\tr (\tilde F_i(t))^2}{\det (\tilde F_i)}
$$ is defined and 
holomorphic.   Since $F_i(t)$ represent globally defined map $f\in\Aut(X)_p,$
we get  (see {Notation and Assumptions 2})
\begin{equation}\label{13}F_j(t)\circ  A_{i,j}=A_{i,j}\circ F_i(t),
\end{equation} which means that 
\begin{equation}\label{14}\tilde F_j(t)\tilde  A_{i,j}=\lambda_{i,j}(t) \tilde A_{i,j}\tilde F_i(t),
\end{equation}
where $\lambda_{i,j}(t)\ne 0$ are some complex functions  in $U_i\cap U_j.$
From \eqref{14}  we have  \begin{enumerate}
\item
$\TD(t):= \TD_i(t)$ for $t\in U_i, $ is holomorphic  and globally defined on $T,$ hence constant, we denote this number  $\TD_f;$
\item  If $\delta_f=\TD_f-4\ne 0,$ then    fix a square root $A_f:=\sqrt{\TD_f-4}\in \BC^*$  and define $\lambda_f=\frac{T_f+A_f}{T_f-A_f}$
as   the ratio of the eigenvalues of $\tilde  F_i(t)$ (it does  not depend on $i$).  Then for every $i$  one can define   coordinates $(t,u_i), \   u_i\in \ov \BC,$
in $V_i=p^{-1}(U_i)$  in such a way that $f(t, u_i)=(t,\lambda_f  u_i).$       The set $S\cap V_i$
of fixed points of $f$  in $V_i$  is  $\{u_i=0\}\cup\{u_i=\infty\}.$ Thus $S$  is an unramified double cover of $T:$  it may be either  a  union of two disjoint sections or one bisection  (see cases {\bf  A, C} below for details).
\item
If $\delta_f=\TD_f-4=0$ then $\tilde  F_i(t)$ is proportional to a unipotent matrix and for every $i$   one can define
in $V_i=p^{-1}(U_i)$   coordinates $(t,w_i) , \  w_i\in\ov \BC,$ in such a way that $f(t,  w_i)=(t, w_i+a_i(t))$  where $a_i(t)$ are  holomorphic  functions  in $U_i.$ The set $S$ of fixed points  in $V_i$ is thus the union of the section  $\{w_i=\infty\}$ and 
$p^{-1}(R_f),$ where 
$$R_f=\cup\{a_i(t)=0\}=\{t\in T \   :    \   f \bigm |_{P_t}=id\}.$$   Since it has codiemsion 1, it has to be empty 
 (see case {\bf  B}   below for details).\end{enumerate}

 In other words,     for every $t\in T$ the selfmap $F_i(t)$ of  $P_t$ is either identity  map, or has two  fixed points, or has one fixed point.
If $\TD_f-4\ne 0$ then  \eqref{fixed2} defines a smooth analytic subset  $S$  of $X$ and   $p^{-1}(t)\cap S$ contains precisely 2 distinct points  for any $t\in T.$
Therefore, $S$ is either an unramified smooth double cover of $T $ or a union of two smooth disjoint sections of $p.$

If $\TD_f-4=0$ then  \eqref {fixed2} defines a smooth section of $p$ over the  complement to an analytic subset $R_f$   of $T$ (that has to be empty) or holds identically on $X.$\end{proof}

Thus we have   the following three cases. 

 {\bf Case A.}  The  set of all fixed points of a non-identity map $ f\in\Aut(X)_p$  is the union of two disjoint sections $S_1$ and $S_2$ of  $p.$ We will say that $f$ has type {\bf A} with   Data $ (S_1,S_2)$ (an ordered pair).
Changing   Data $ (S_1,S_2)$ to Data $ (S_2,S_1)$ would lead to changing $\lambda_f$ for $\frac{1}{\lambda_f}.$ 
 
 \begin{Lemma}\label{caseA} Assume that   $ f \in\Aut(X_p), \ f\ne id,$ has type {\bf A} with   Data $ (S_1,S_2)$ and $X\not\sim T\times\BP^1.$  Then \begin{itemize}
 \item  $X\setminus S_2$ is the total body of a holomorphic line bundle $\LL_f$ with zero section $S_1;$ 
 \item $\LL_f$ has no  other sections;
 \item $\Aut(X)_p$  contains a subgroup $\Gamma_A\cong \BC^*$ of all $ g\in \Aut(X)_p$  with the same Data $ (S_1,S_2)$;
 \item any  automorphism $ g\in \Aut(X)_p$ of type {\bf A} belongs to $\Gamma_A;$
 \item an  automorphism $g\in\Gamma_A$ is uniquely determined by its restriction to any fiber $P_t$ with $t\in T$  (cf. Lemma 4.3 of 
\cite{kostya}).\end{itemize}
\end{Lemma}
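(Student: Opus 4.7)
\medskip

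\noindent\textbf{Proof plan.} The strategy is to promote the explicit local coordinates constructed in the proof of \propref{fixedpoints} to a holomorphic line bundle structure on $X\setminus S_2$, and then exploit the poorness of $T$ together with the assumption $X\not\sim T\times\BP^1$ to pin down sections and fiberwise automorphisms.

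First I would fix a fine covering $\{U_i\}$ of $T$ and the coordinates $(t,u_i)$ on $V_i=p^{-1}(U_i)$ produced in case (2) of the proof of \propref{fixedpoints}, in which $f(t,u_i)=(t,\lambda_f u_i)$ and $S_1\cap V_i=\{u_i=0\}$, $S_2\cap V_i=\{u_i=\infty\}$. Because $S_1$ and $S_2$ are globally defined sections of $p$, the transition maps $A_{i,j}(t)\in\PGL(2,\BC)$ between the coordinates $u_i$ and $u_j$ must fix both $0$ and $\infty$ for every $t$, hence take the diagonal form $u_j=c_{i,j}(t)u_i$ with $c_{i,j}:U_i\cap U_j\to\BC^*$ holomorphic. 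These transition functions satisfy the cocycle condition, so $X\setminus S_2$ acquires the structure of the total space of a holomorphic line bundle $\LL_f\to T$ whose zero section is $S_1$.

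Next I would prove the section claims simultaneously. Any section $\sigma:T\to X$ of $p$ is either equal to $S_2$ or, by \lemref{section}, disjoint from $S_2$; in the latter case $\sigma(T)\subset\LL_f$ and $\sigma$ is a holomorphic section of the line bundle $\LL_f$. The zero locus of such a section is either empty or an analytic subset of $T$ of pure codimension $1$. Since $T$ is poor, it contains no analytic subsets of codimension $1$, so the section is nowhere vanishing, meaning $\LL_f$ is trivial. But then $X\cong\BP(\LL_f\oplus\mathcal{O}_T)\sim T\times\BP^1$, contradicting the hypothesis. Hence $\LL_f$ has no nonzero section and the only sections of $p$ are $S_1$ and $S_2$; in particular, any type $\mathbf{A}$ automorphism of $\Aut(X)_p$ has its fixed divisor equal to $S_1\cup S_2$, so its Data agrees with $(S_1,S_2)$ up to order.

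Finally, for $\Gamma_A$, any $g\in\Aut(X)_p$ of type $\mathbf{A}$ with Data $(S_1,S_2)$ fixes both sections pointwise, so in the coordinates $(t,u_i)$ it must act by $g(t,u_i)=(t,\mu_i(t)u_i)$ for some holomorphic $\mu_i:U_i\to\BC^*$. The diagonal transition functions $c_{i,j}(t)$ force $\mu_i(t)=\mu_j(t)$ on overlaps, so the $\mu_i$ glue into a globally defined nowhere vanishing holomorphic function on the compact connected manifold $T$; this function is therefore a constant $c\in\BC^*$. Conversely, fiberwise multiplication by any $c\in\BC^*$ on $\LL_f$, extended by fixing $S_2$ pointwise, defines an element of $\Aut(X)_p$ of type $\mathbf{A}$ with the same Data. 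This yields an isomorphism $\Gamma_A\cong\BC^*$, and since $c$ can be read off from $g\bigm|_{P_t}$ for any $t\in T$, the restriction determines $g$ uniquely.

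The step I expect to be the main obstacle is the nonexistence of other sections of $\LL_f$: it is the only place where both the absence of codimension $1$ analytic subsets in $T$ and the standing assumption $X\not\sim T\times\BP^1$ are used, and it is what subsequently forces every type $\mathbf{A}$ automorphism to share the Data $(S_1,S_2)$ and constrains the gluing constant in the $\Gamma_A$ discussion to be globally constant.
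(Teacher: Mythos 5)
Your proposal is correct and takes essentially the same approach as the paper: the same construction of $\LL_f$ from the coordinates $(t,u_i)$ of \propref{fixedpoints}, the same use of poorness of $T$ and of the hypothesis $X\not\sim T\times\BP^1$ to exclude further sections, and the same identification of $\Gamma_A$ with $\BC^*$ via a globally constant multiplier. The only minor variation is at the "no other sections" step, where you trivialize $\LL_f$ by a nowhere-vanishing section (zero locus empty since $T$ has no codimension-$1$ analytic subsets), while the paper instead observes that a third section would give three pairwise disjoint sections of the $\BP^1$-bundle and hence $X\sim T\times\BP^1$; both arguments rest on the same two hypotheses.
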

\begin{proof}  Similarly to  proof   of \propref{fixedpoints},  let  $\{U_i\}$ be a fine covering of $T.$
 Let $ f\in  \Aut(X)_p, $  be defined    in $V_i=p^{-1}(U_i)$  with coordinates $(t,(x_i:y_i))$ by  $f_i(t,(x_i:y_i))=(t,F_i(t)(x_i:y_i)).$
  Let $z_i=\frac {y_i}{x_i}\in\ov \BC,$ and $$S_1\cap U_i=\{(t,z_i=a_i(t)\}, \ S_2\cap U_i=\{(t,z_i=b_i(t)\}.$$
  Since $F_i(t)=\psi_{U_i,f}(t)$ depend on $t$   holomorphically,    $a_i(t)$ and $b_i(t)$ are meromorphic  functions in $U_i.$ Since $S_1\cap  S_2=\emptyset,$  $a_i(t)\ne b_i(t)$   for all $t\in U_i  $  and all $i.$

The holomorphic  coordinate change  in $V_i$ introduced in 
item (1) of the proof of \propref{fixedpoints} is 
$$(t,z_i)\to (t, \frac {z_i-a_i(t)}{z_i-b_i(t)})=(t,u_i).$$ 

 In these coordinates $S_1\cap V_i=\{u_i=0\},$ and    $S_2\cap V_i=\{u_i=\infty\}. $
Since   both sections are globally defined and  $f$-invariant,  there are holomorphic functions $\mu_{i,j}\in U_i\cap  U_j, \mu_{i,j}\ne 0,$ such that 
$$(t,u_j)=\Phi_{i,j}(t,u_i)=(t,\mu_{i,j}u_i).$$
Since $u_j=\mu_{i,j}u_i=\mu_{k,j}u_k$  in $U_i\cap U_j\cap U_k,$ we have $\mu_{i,k}=\mu_{j,k}\mu_{i,j}$, that is we have a cocycle. It defines 
 a holomorphic line bundle $\mathcal L_f$  on $T$ with transition functions $ \mu_{i,j}$  such that $X\setminus S_2$ is the total  body of   $\mathcal L_f$ and $S_1$ is the zero section of $\mathcal L_f.$     Moreover,  (see item  (1) of proof of \propref{fixedpoints})
$$f(t,u_i)=(t,\lambda_fu_i) , \ \lambda_f\ne 0.$$ 
If $\mathcal L_f$  had another section, then the $\BP^1-$bundle $X$ would have three disjoint sections, thus would be isomorphic to $T\times\BP^1 $  (excluded case).   Since  every $g\in \Aut(X)_p$ of type {\bf A} has sections as the set of fixed points, it has  to have  the same Data   $(S_1,S_2). $

The maps having the same Data differ   only  by the coefficient
 $\lambda_f\in\BC^*.$ 
  It follows that an automorphism of type {\bf A} is uniquely defined by its restriction to any fiber $P_t, t\in T$  (cf. Lemma 4.3 of \cite{kostya}). On the other hand,  for  every $\lambda\in\BC^*$ one can define  an automorphism 
$ f_{\lambda}\in\Aut(X)_p$  of  type {\bf A} on $X$ by formula: \begin{equation}\label{lf}  (t,u_i)=(t,\lambda  u_i). \end{equation} Thus  all the automorphisms of  type {\bf A} on $X$ form a subgroup $\Gamma_A\cong\BC^*$  of $\Aut(X)_p.$ \end{proof}

{\bf Case B.} If the  set of all fixed points of non-identity  $ f\in\Aut(X)_p$  is a section $S$ of  $(X,p,T)$
we will say that $f$ has type  {\bf B} with Data   $S.$ 
\begin{Lemma}\label{caseB}  Assume that $ f \in\Aut(X_p), \ f\ne id,$ has type {\bf B} with   Data $S$ and $X\not\sim T\times\BP^1.$
Then\begin{itemize}\item $X\setminus S$ is an $\BA^1-$bundle $\AA_f$ over $T;$
\item  $\mathcal A_f$ has no sections;
\item $\Aut(X)_p$  contains a subgroup $\Gamma_B\cong \BC$ of all $ g\in \Aut(X)_p$  with the same Data  $S;$
 \item any  automorphism $ g\in \Aut(X)_p$ of type {\bf B} belongs to $\Gamma_B;$
 \item an  automorphism $g\in\Gamma_B$ is uniquely determined by its restriction to any fiber $P_t$ with $t\in T;$  
\item $\Aut(X)_p$  contains no automorphisms of type {\bf A}.\end{itemize}\end{Lemma}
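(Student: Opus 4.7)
The strategy parallels \lemref{caseA} but starts from the unipotent normal form in the fibre. Fix a fine covering $\{U_i\}$ of $T$ and, as in the analysis of the case $\TD_f=4$ in the proof of \propref{fixedpoints}, choose new fibre coordinates $(t,w_i)$ on $V_i=p^{-1}(U_i)$ so that $S\cap V_i=\{w_i=\infty\}$ and $f(t,w_i)=(t,w_i+a_i(t))$ with $a_i$ holomorphic on $U_i$. Because $S$ is a globally defined $f$-invariant section, each transition $\Phi_{ij}$ must fix the point $w_i=\infty$ and is therefore affine: $w_j=\alpha_{ij}(t)w_i+\beta_{ij}(t)$ with $\alpha_{ij}\in\mathcal O^{*}(U_i\cap U_j)$. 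Hence $\mathcal A_f:=X\setminus S$ is an affine bundle over $T$ modelled on the line bundle $\mathcal L:=\{\alpha_{ij}\}$, and the compatibility $\Phi_{ij}\circ f_i=f_j\circ\Phi_{ij}$ translates to $a_j=\alpha_{ij}a_i$, so $\{a_i\}$ is a global holomorphic section of $\mathcal L$.

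Next I invoke poorness of $T$. If some $a_i\equiv 0$, then by $a_j=\alpha_{ij}a_i$ and connectedness of $T$ all $a_j$ vanish and $f=id$, a contradiction. Otherwise the zero locus $R_f=\bigcup_i\{a_i=0\}$ would be a pure codimension-one analytic subset of $T$, impossible since $T$ is poor. Therefore $\{a_i\}$ is nowhere vanishing and trivializes $\mathcal L$, so $\mathcal L\cong\mathcal O_T$. To rule out sections of $\mathcal A_f$, suppose a section $R$ existed; by \lemref{section} it would be disjoint from $S$, and writing $R\cap V_i=\{w_i=r_i(t)\}$ the coordinate change $\tilde w_i=w_i-r_i(t)$ kills $\beta_{ij}$ and yields $\tilde w_j=\alpha_{ij}\tilde w_i$, after which the rescaling $\hat w_i=\tilde w_i/a_i(t)$ gives $\hat w_j=\hat w_i$. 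This would produce an isomorphism $X\sim T\times\BP^1$, contradicting the hypothesis. Consequently $\mathcal A_f$ admits no section, and since any section of $p$ in $X$ other than $S$ would lie in $\mathcal A_f$ (by \lemref{section}), $S$ is the unique section of $p$.

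The remaining items are now formal. Any type~\textbf{A} automorphism would supply two disjoint sections of $p$, contradicting uniqueness of $S$, so $\Aut(X)_p$ contains no type~\textbf{A} element. A type~\textbf{B} element $g$ has its data a section of $p$, hence necessarily $S$; in the coordinates above, $g$ takes the form $g(t,w_i)=(t,w_i+b_i(t))$ with $\{b_i\}\in H^0(T,\mathcal L)=H^0(T,\mathcal O_T)=\BC$, i.e.\ a scalar multiple $c\cdot\{a_i\}$ of the trivializing section. Composition of such $g$'s corresponds to addition of these scalars, with $c=0$ giving $id$, yielding $\Gamma_B\cong\BC^{+}$; and the restriction $g_c|_{P_t}$ reads $w\mapsto w+c$ in the fibre coordinate, which recovers $c$ and shows that $g$ is determined by its restriction to any fibre.

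The step that I expect to require the most care is the absence of sections of $\mathcal A_f$: the successive coordinate changes (first translate by $r_i$, then rescale by $a_i$) must be performed using exactly the trivialization of $\mathcal L$ provided by $\{a_i\}$, and one has to verify that these normalisations genuinely globalise to a biholomorphism $X\sim T\times\BP^1$. The remaining items reduce to bookkeeping parallel to the type~\textbf{A} argument, with poorness of $T$ used only to kill the codimension-one loci where sections of $\mathcal L$ could vanish.
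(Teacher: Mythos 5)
Your argument is correct and follows essentially the same route as the paper: the translation normal form $w_i\mapsto w_i+a_i(t)$, the affine transitions making $\{a_i\}$ a holomorphic section of the line bundle $\{\alpha_{ij}\}$, poorness of $T$ forcing that section to be nowhere vanishing (hence the bundle trivial and the data-$S$ automorphisms a copy of $\BC^{+}$), and exclusion of any further section because it would force $X\sim T\times\BP^1$. The only cosmetic differences are that you obtain the contradiction $X\sim T\times\BP^1$ by an explicit coordinate normalization where the paper invokes three disjoint sections ($S$, $S_1$, $f(S_1)$), and that you import the normal-form coordinates from the proof of \propref{fixedpoints} whereas the paper actually constructs them inside this lemma (via Levi extension of $g_i$ across the codimension $\ge 2$ locus where $f$ acts trivially on the fiber); also, strictly speaking, $g_c\bigm|_{P_t}$ reads $w\mapsto w+c\,a_i(t)$ rather than $w\mapsto w+c$ unless one first rescales by the trivializing section, which does not affect the conclusion.
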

\begin{proof}
   In notation of \propref{fixedpoints}  in this case    $\delta_f=TD_f-4=0.$   Thus \eqref{fixed2} has the set of solutions $S=\{2y_if_{i,12}+x_i( f_{i,11}-f_{i,22})=0\}\subset X$ of fixed points of
 $f.$   Consider the set $R_f\subset T$ defined locally  by conditions \begin{equation}\label{113}f_{i,12}(t)=f_{i,21}(t)=0, \ f_{i,11}(t)=f_{i,22}(t).\end{equation}  Since $f\ne id,$  $R_f$  is  an analytic subset of $T,$ and  $\codim R_f\ge 2.$  Note that $p^{-1}(R_f)\subset S.$  Consider the function 
  $$g_i(t)=\frac{f_{i,22}(t)-f_{i,11}(t)}{f_{i,21}(t)}=\frac{2f_{i,12}(t)}{f_{i,11}(t)-f_{i,22}(t)}$$
  (the equality follows from  $\delta_f=0$). Function $g_i$ is meromorphic in $ U_i\setminus R_f.$  Since $\codim R_f\ge 2,$
  by the Levi's   Theorem (\cite{Levi},  \cite[Theorem 4 Chapter   VII]{Narasimhan}, \cite[Section 4.8]{Fi})  $g_i$  may be extended to a meromorphic function to $U_i.$ 

       Define $w_i=\frac  {y_i}{x_i+y_ig_i(t)}\in\ov \BC.$   The direct computation shows that $f(t,w_i)=w_i+a_i(t),$  where $a_i=\frac{2f_{i,21}(t)}{\tr (\tilde F_i(t))}.$  Since $\delta_f=0$ the denominator never vanishes, thus $a_i(t)$ is a holomorphic function in $U_i.$
       
       The set  $\{a_i(t)=0\}=R_f\cap U_i$ has codimension 1  in $U_i,$  which is impossible  if   $R_f\ne\emptyset. $
 It follows that $R_f=\emptyset.$  Thus, 
 $f \bigm  |_{P_{t}}\ne id$ for any $t\in T$ and  $a_i(t)$ does not vanish    in $ U_i.$

Since $S=\{w_i=\infty\}$ is globally defined, we have   $w_j=A_{i,j}(w_i)=\nu_{i,j}w_i+\tau_{i,j},$ where $\nu_{i,j}$ and   $\tau_{i,j}$ are holomorphic  functions  in $U_i\cap  U_j.$  
Since $f$ is globally defined 
$$\nu_{i,j}(w_i+a_i(t))+\tau_{i,j}= (\nu_{i,j}w_i+\tau_{i,j})+a_j(t),$$
we have 
\begin{itemize}
\item $\{\nu_{i,j}\}$  do not vanish in $U_i\cap  U_j $ and form a cocycle, thus define a line holomorphic bundle $\mathcal M_f$  on $T;$
\item   $\{a_i(t)\}$ is a section of  $\mathcal M_f.$
\end{itemize}
 Since a nontrivial  holomorphic line bundle on $T$ has no nonzero sections, either $a_i(t)\equiv 0$ and $f=id$  (the excluded case),  or  $\mathcal M_f$  is trivial and   we have a global holomorphic, hence constant function $$a_i(t)\equiv a_f.$$ 
    Thus,\begin{itemize}\item  $X\setminus S$ is an $\BA^1$-bundle   $\mathcal A_f$  with transition  holomorphic functions  $\tau_{ij} $ in $U_i\cap  U_j;$
\item  for every $b\in \BC$  there is $f_b\in\Aut(X)_p$ defined in each $V_i$ by $$f_b(t,w_i)=(t,w_i+b);$$
\item  The subgroup $\Gamma_B$  of all $f_b, \ b\in \BC$ is isomorphic to $\BC^+.$\end{itemize}

Let us show that $\mathcal A_f$   has no sections. If it had a section $S_1,$ then $S, S_1, f(S_1)$ would be three disjoint sections of $X.$  Since  $X\not\sim  T\times\BP^1,$ this is impossible. It follows that $\Aut(X)_p$ does not contain neither an automorphism of type {\bf A} or an automorphism of type {\bf B} with Data different from $S.$

   The maps having the same Data  $S$ differ   only  by the summand
 $a_f\in\BC.$  It follows that an automorphism of type {\bf B} is uniquely defined by its restriction to a fiber $P_t$ for every $t\in T..$ \end{proof}

{\bf Case C. } Assume that $X\not\sim  T\times\BP^1$ and   the  set $S\subset X$  of all   fixed points     of non-identity map $ f\in\Aut(X)_p$ is a smooth unramified double cover of $T.$ We will call such an $f$ an automorphism  of type {\bf C }  defined by Data $S.$   

Consider $$\tilde X:=\tilde X_f:=S\times_T  X=\{(s,x)\in S\times  X \subset X\times  X  \ \ :  p(s)=p(x)\}.$$ We denote the  restriction of $p$  to  $S$
by the same letter  $p,$  while   $p_X$ and $\tilde p$ stand for 
 the restrictions to $\tilde X$ of natural projections $ S\times  X\to X$ and $ S\times  X\to S $ respectively.  We write
$\invl:S\to S$ for  an involution (the  only non-trivial deck transformation for $ p \bigm |_S$).

We have

 \begin{itemize}

\item [a)]  
The following 
 diagram commutes
\begin{equation}\label{diagram41}
\begin{CD}
 \tilde X  @>{p_X}>>X\\
@V\tilde p VV@V pVV\\
S\subset X  @>{p}>>T
\end{CD}.\end{equation}
\item [b)] $p_X:\tilde X\to X $ is an unramified double cover of $X;$
\item [c)] Every fiber  $\tilde p^{-1}(s), s\in S$ is isomorphic to $$P_{p(s)}=p^{-1}(p(s))\sim \BP^1;$$
\item [d)]  $\BP^1$-bundle $\tilde X$ over $S$ has two sections: $$S_+:=S_+(f):=\{(s, s)\in \tilde X, \  s\in S\subset X\}$$ and $$ S_-:=S_-(f):=\{(s, \invl(s))\in \tilde X,  \ s\in S\subset X\}.$$ They are mapped onto $S$ isomorphically by $p_X.$
\item[e)] Every section $N=\{t,\sigma(t)\}  $   of $p$ in $X$  induces the section
 $\tilde N :=\{(s,  \sigma(p(s))\}$  of  $\tilde p $  in $\tilde X.$   
We have   $p_X(\tilde N)=N$ is a section of $p,$ 
  thus  $\tilde N$ cannot coincide  $ S_+$ or $ S_-.$
\item [f)]  Every $h\in \Aut(X)_p$ induces an automorphism $\tilde h\in \Aut(\tilde X)_{\tilde  p}$ defined by $$\tilde h(s,x)=(s,h(x)).$$ 
 \item [g)] In particular, for  the lift $\tilde f$ of $f$  all the
 points of  $S_+$ and $S_-$ are fixed, hence, $\tilde f$ is of type {\bf A}  with Data $( S_+, S_-).$ 
 \item[h)]   The map $\tilde f$ is uniquely determined by its restriction to any fiber $\tilde P_s=\tilde p^{-1}(s)$ (see case {\bf A}), hence $f$ is uniquely determined by its restriction
on the   fiber $ P_t= p^{-1}(t).$  Indeed, if $f\bigm |_{P_t}=id,$  then

- \  $\tilde f\bigm |_{P_s}=id,$ hence 

- \  $\tilde f=id,$  hence 

- \ $\tilde f\bigm |_{P_{s_1}}=id$  for every $s_1\in S,$  hence 

 - \  $f\bigm |_{P_{t_1}}=id$   for $t_1=p(s_1)\in T.$
 
\item[i)] It follows that $h\mapsto\tilde h$ is a   group  embedding of $\Aut(X)_p$ to $\Aut(\tilde X)_{\tilde p}.$ 

 \item [j)]   Involution  $s\to \invl(s)$ may be extended from $S$ to a  holomorphic involution $\tilde X$ by  $$\invl(s,x)= (\invl(s),x);$$
\item[k)]   $S$ is a poor manifold  by \lemref{cover}. 
\end{itemize}
Clearly, the maps having the same Data differ   only  by the coefficient
 $\lambda_{\tilde f}\in\BC^*.$ 
 
 \begin{corollary}\label{csection} If $X$ admits a  non-identity   automorphism of type {\bf C}
and   $\tilde X_f\not \sim S\times \BP^1$   then  $\BP^1-$bundle $p:X\to T$   does not have  a section. In particular, it does not admit automorphisms of type  {\bf  A} or {\bf B}.\end{corollary}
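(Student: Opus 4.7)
The plan is to argue by contradiction, using the lifted bundle $\tilde X_f \to S$ and the three-sections-force-triviality principle that already appears in \lemref{caseA}. Suppose $p\colon X\to T$ admits a section $N=\{(t,\sigma(t))\}$. By item~(e) of case~\textbf{C}, $N$ induces a section $\tilde N=\{(s,\sigma(p(s)))\}$ of $\tilde p\colon \tilde X_f\to S$, and item~(e) also records that $\tilde N$ coincides with neither $S_+$ nor $S_-$. Thus $\tilde p$ has at least three distinct sections $S_+,\,S_-,\,\tilde N$.

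Next I would verify these three sections are pairwise disjoint. By item~(k), $S$ is itself a poor manifold, so in particular $S$ contains no analytic subsets of codimension~$1$. Therefore \lemref{section}(1) applies to the $\BP^1$-bundle $\tilde p\colon \tilde X_f\to S$ and forces $S_+,\,S_-,\,\tilde N$ to be mutually disjoint. A $\BP^1$-bundle with three disjoint sections is holomorphically trivial (one can construct a fiberwise isomorphism to $\BP^1$ sending the three sections to $0,1,\infty$; this is precisely the step invoked at the end of the proof of \lemref{caseA}). Hence $\tilde X_f\sim S\times\BP^1$, which contradicts the standing hypothesis $\tilde X_f\not\sim S\times\BP^1$. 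This gives the first assertion: $p$ has no section.

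For the ``in particular'' statement, I would simply observe that each of the other fixed-point types produces a section of $p$ and hence is excluded. By \lemref{caseA}, an automorphism of type~\textbf{A} comes with Data $(S_1,S_2)$ where $S_1,S_2$ are sections of $p$; by \lemref{caseB}, an automorphism of type~\textbf{B} has Data a section $S$ of $p$. In either case the existence of a section of $p$ would contradict what was just proved.

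The only slightly delicate step is the disjointness of the three sections on $\tilde X_f$; this is where poorness of $S$ (inherited via \lemref{cover} from poorness of $T$) is essential, because otherwise two distinct sections of a $\BP^1$-bundle could meet along a codimension-$1$ subvariety and the triviality conclusion would fail. Once disjointness is in place, the reduction to the trivial bundle is formal and the contradiction with $\tilde X_f\not\sim S\times\BP^1$ is immediate.
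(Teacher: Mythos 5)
Your argument is correct and follows essentially the same route as the paper: a section of $p$ lifts (via the fiber product) to a section of $\tilde p$ distinct from $S_+$ and $S_-$, and three pairwise disjoint sections force $\tilde X_f\sim S\times\BP^1$, contradicting the hypothesis; the paper merely states the disjointness implicitly, while you justify it via \lemref{cover} and \lemref{section}(1), which is exactly the intended reasoning. The deduction that types \textbf{A} and \textbf{B} are excluded because they produce sections also matches the paper's proof.
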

\begin{proof} Indeed,  if  $X$ admitted  an automorphism  of type  {\bf  A} or {\bf B}  then, by \propref{fixedpoints}, there would be  a section  $S$ of $p$  in $X.$  The preimage $p_X^{-1}(S)\subset \tilde X$
would be  a section $\tilde S$   of $\tilde p$  in $\tilde X_f.$  
 Thus 
 $\tilde X_f$ would  admit three disjoint sections:    $S_-, S_+,$ and $\tilde S. $  In this case  $\tilde X_f$ would be the direct product $S\times\BP^1.$ \end{proof}

\begin{Lemma}\label{caseAC} Assume that $ f \in\Aut(X)_p, \ f\ne id,$ has type {\bf A} with   Data $ (S_1,S_2)$ and $X\not\sim T\times\BP^1.$  Let $\LL_f$ be defined by $f$  (see Case {\bf A} for   the definition) holomorphic  line bundle on $T$   with transition functions $\mu_{ij}$ and such that $S_1 $ is its zero section. 
Then one of the following holds:\begin{enumerate}
\item   $\Aut(X_p)=\Gamma_A\cong \BC^*$  and  $X$ admits  only  automorphisms of type {\bf A}  except   the $id.$; 
\item $\Aut(X_p)$ contains  an automorphism $h$  of type {\bf C}  with Data $S.$ In this  case $\LL_f^{\otimes 2}$  is a trivial holomorphic line bundle and the corresponding to $h$ double cover   $\tilde X_h\sim S\times\BP^1.$\end{enumerate}\end{Lemma}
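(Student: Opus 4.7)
The plan is to split into cases based on whether $\Aut(X)_p$ contains any non-identity automorphism that is \emph{not} of type \textbf{A}. Since $f$ has type \textbf{A}, the last bullet of \lemref{caseB} rules out any automorphism of type \textbf{B} in $\Aut(X)_p$. Hence every non-identity element of $\Aut(X)_p$ is of type \textbf{A} or of type \textbf{C}. If no automorphism of type \textbf{C} is present, then by \lemref{caseA} every non-identity $g\in\Aut(X)_p$ has the same Data as $f$ and therefore lies in $\Gamma_A\cong\BC^{*}$; this yields conclusion (1).

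In the remaining case, fix $h\in\Aut(X)_p$ of type \textbf{C} with Data $S$. Because $X$ carries the section $S_1$ of $p$, \corref{csection} forces $\tilde X_h=S\times_T X\to S$ to be trivial as a $\BP^1$-bundle, i.e.\ $\tilde X_h\sim S\times\BP^1$. Using the description $X\sim\BP(\mathcal O_T\oplus\LL_f)$ (with $S_1,S_2$ the zero and infinity sections), pullback along the \'etale double cover $p\colon S\to T$ identifies $\tilde X_h$ with $\BP(\mathcal O_S\oplus p^{*}\LL_f)$ as a $\BP^1$-bundle over $S$.

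It remains to deduce the triviality of $\LL_f^{\otimes 2}$. By \lemref{cover}, $S$ is itself poor, so any nontrivial line bundle on $S$ has no nonzero holomorphic sections (a nonzero section on a poor manifold is nowhere vanishing, since its zero locus would be a codimension-one analytic subset). Consequently, if $p^{*}\LL_f$ were nontrivial, the line subbundles of $\mathcal O_S\oplus p^{*}\LL_f$ would, up to scalar, be only the two canonical summands, so $\tilde p$ would admit only two sections of $\tilde p$, contradicting $\tilde X_h\sim S\times\BP^1$. Therefore $p^{*}\LL_f$ is trivial, and invoking the norm identity $\mathrm{Nm}\circ p^{*}=[2]$ on $\Pic(T)$ for the \'etale degree-two cover $p$ gives $\LL_f^{\otimes 2}=\mathrm{Nm}(p^{*}\LL_f)\sim\mathcal O_T$, as desired. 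The principal technical point is the rigidity of line subbundles of $\mathcal O_S\oplus p^{*}\LL_f$ on the poor manifold $S$; everything else follows by combining \lemref{caseA}, \lemref{caseB} and \corref{csection}.
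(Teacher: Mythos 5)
Your proposal is correct, and part (1) as well as the deduction $\tilde X_h\sim S\times\BP^1$ from \corref{csection} coincide with the paper's argument; but your route to the triviality of $\LL_f^{\otimes 2}$ is genuinely different. The paper works directly on $X$ with the fine covering: since $h$ must interchange the only two sections $S_1,S_2$, it is given locally by $t_i\mapsto \nu_i(u)/t_i$ with $\nu_i$ holomorphic and nowhere zero, and compatibility with the transition functions forces $\mu_{ij}^2\nu_i=\nu_j$; thus $(\nu_i)$ is a nowhere-vanishing section of $\LL_f^{\otimes 2}$, which trivializes it, makes $\nu_i\equiv\nu$ constant, and as a bonus exhibits the bisection $S$ explicitly as $\{t_i^2=\nu\}$. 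You instead pass to the double cover: identify $X$ with $\BP(\mathcal O_T\oplus\LL_f)$ (which is the content of \lemref{caseA}), base-change to get $\tilde X_h\cong\BP(\mathcal O_S\oplus \pi^*\LL_f)$ over the poor manifold $S$ (poor by \lemref{cover}), observe that on a poor base a rank-two split bundle with nontrivial second summand has exactly the two canonical line subbundles and hence only two sections, contradict the abundance of sections of $S\times\BP^1$, and then descend the triviality of $\pi^*\LL_f$ via the norm map (or, equivalently, via the kernel of $\pi^*$ on $\Pic(T)$ for an \'etale double cover). This buys a coordinate-free argument that isolates the geometric reason (section rigidity over poor bases), at the cost of importing the norm identity, which the paper never needs, and of losing the explicit local form of $h$ and of $S$ that the cocycle computation provides. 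Two small points you leave implicit, both harmless at the paper's level of rigor: the transfer of sections along the biholomorphism $\tilde X_h\sim S\times\BP^1$ uses that any such map is fiberwise over an automorphism of $S$ (\lemref{tau}), and "up to scalar" should read that the two summands are the only line subbundles, hence the only sections.
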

\begin{proof}

(1)  By \lemref{caseB} we know that $\Aut(X)_p$ contains no automorphism of type {\bf B}. If there is  no automorphism of type {\bf C}  then  all $f\in\Aut(X)_p$  are of type {\bf A}  except the $id.$. By \lemref{caseA} in this case $\Aut(X_p)=\Gamma_A\cong \BC^*.$

(2) Let $h\in \Aut(X)_p$  be   of type {\bf C}  with Data $S.$   Let a point $t\in  U_i\subset T,$  where $U_i$ is a fine covering of $T,$  and let $(u, t_i), \  u\in U_i, \  t_i\in\ov\BC$ be coordinates   in $V_i=p^{-1}(U_i)\subset X.$    Since $S_1\cup S_2$ are the only sections of $p:X\to T$ and points of $S_1\cup S_2$ are not fixed  by $h,$ we have
\begin{equation}\label{onfibre}h(u,t_i)=\frac{\nu_i(u)}{t_i}\end{equation}
where $\nu_i$ are holomorphic in $U_i$ functions.  Since $h$ is defined globally, we have
 $$\mu_{ij}(u)(\frac{\nu_i(u)}{t_i})=\frac{\nu_j(u)}{\mu_{ij}(u)t_i}.$$

Thus $\mu_{ij}(u)^2\nu_i(u)=\nu_j(u).$ Since  non-trivial line bundle over $T$ has only zero section, it follows that $\LL_f^{\otimes 2}$ is a trivial bundle and $\nu_i(u)=\nu$ is a constant function.  The section $S$ is defined locally by equation $t_i^2=\nu.$   By \corref{csection},
$\tilde X\sim S\times\BP^1.$ \end{proof}

 Assume that $ f\in \Aut(X)_p,\  f\ne id,$ and $f$ is of type {\bf C} defined by Data (bisection) $S.$  
Let $\tilde X:=\tilde X_f$  be  the corresponding double cover (see case {\bf C} in   \secref{p1}  and  diagram  \eqref{diagram41}).
Recall that $S$ is poor and $\tilde p:\tilde X\to S$ has two sections 
 $S_+=\{(s,s), \ s\in S\}$ and $S_-=\{(s,\invl(s)) \ s\in S\}.$

\begin{Lemma}\label {Yf}  Assume that $ f\in \Aut(X)_p,\  f\ne id,$ and $f$ is of type {\bf C} defined by Data (bisection) $S.$

(1) If the corresponding double cover (see case {\bf C}) $\tilde X:=\tilde X_f$  is  not isomorphic to $S\times \BP^1$ then $\Aut(X)_p$ 
has exponent 2 and consists of $2$ or 
 4 elements.

 (2)
   If  $\tilde X:=\tilde X_f$ is isomorphic to $S\times \BP^1$  then there  are two  sections  $S_1,S_2\subset X$ of $p.$  Moreover, $\Aut(X)_p$  is  a disjoint union of its abelian complex Lie subgroup $\Gamma\cong \BC^*$ of index 2 and its coset 
   $\Gamma^{\prime}.$ Subgroup $\Gamma$ consists in those $f\in\Aut(X)_p $ that fix $S_1$ and $S_2.$  Coset  $\Gamma^{\prime}$ consists in those $f\in\Aut(X )_p$ that interchange $S_1$ and $S_2.$
\end{Lemma}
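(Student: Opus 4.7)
The strategy is to lift every $h\in\Aut(X)_p$ to $\tilde h\in\Aut(\tilde X)_{\tilde p}$ via the embedding of Case \textbf{C}(i), noting that the image consists exactly of those automorphisms commuting with the deck involution $\invl$, and to combine this with the embedding $\Aut(X)_p\hookrightarrow\PGL(2,\BC)$ obtained from restriction to a single fiber (Case \textbf{C}(h)).

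For (1), suppose $\tilde X=\tilde X_f\not\sim S\times\BP^1$. By \corref{csection} the bundle $p:X\to T$ has no section, hence $\Aut(X)_p$ contains no automorphisms of type \textbf{A} or \textbf{B} and every non-identity element is of type \textbf{C}. The key step is to prove that any such element $g$ satisfies $g^2=id$: one applies the Case \textbf{C} construction to $g$ itself, obtaining the double cover $\tilde X_g=S_g\times_T X$, on which the lift $\tilde g$ is type \textbf{A} with sections $S_+(g), S_-(g)$, belonging to the subgroup $\Gamma_A\cong\BC^*$ of \lemref{caseA}. The corresponding deck involution of $\tilde X_g\to X$ interchanges $S_+(g)$ and $S_-(g)$, hence in the local coordinate $u$ of \lemref{caseA} acts by $u\mapsto c(s)/u$ while $\tilde g$ acts by $u\mapsto\lambda_{\tilde g} u$; commutation forces $\lambda_{\tilde g}^2=1$, so $\lambda_{\tilde g}=-1$ and $g^2=id$. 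Thus $\Aut(X)_p$ has exponent $2$ and is therefore abelian, and restriction to a fiber embeds it into $\PGL(2,\BC)$ as an abelian subgroup of involutions and identity. A direct computation (any two commuting involutions in $\PGL(2,\BC)$ generate a Klein four-group) shows the image has order at most $4$; since $f\neq id$ lies in it, $|\Aut(X)_p|\in\{2,4\}$.

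For (2), suppose $\tilde X\sim S\times\BP^1$ and write the deck involution as $\invl(s,z)=(\invl(s),\phi_s(z))$ with $s\mapsto\phi_s$ a holomorphic map from $S$ to $\PGL(2,\BC)$. Since $S$ is compact and $\PGL(2,\BC)$ is affine algebraic, this map is constant, say $\phi_s\equiv M$. From $\invl^2=id$ we get $M^2=id$, and $M=id$ would force $X\sim T\times\BP^1$ (excluded), so $M$ is a non-trivial involution; after a linear change of coordinate on $\BP^1$ we may assume $M(z)=-z$. The same constancy argument gives $\Aut(\tilde X)_{\tilde p}\cong\PGL(2,\BC)$, and hence $\Aut(X)_p\cong C_{\PGL(2,\BC)}(M)$ as complex Lie groups. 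A direct computation shows this centralizer equals $\Gamma\sqcup\Gamma'$ with $\Gamma=\{z\mapsto\lambda z:\lambda\in\BC^*\}\cong\BC^*$ and $\Gamma'=\{z\mapsto c/z:c\in\BC^*\}$. Two sections of $p$ arise by descending the $\invl$-invariant divisors $S\times\{0\}$ and $S\times\{\infty\}$ under $p_X$: set $S_1:=p_X(S\times\{0\})$ and $S_2:=p_X(S\times\{\infty\})$. Since elements of $\Gamma$ fix $\{z=0\}$ and $\{z=\infty\}$, they fix $S_1$ and $S_2$; since elements of $\Gamma'$ swap these loci, they interchange $S_1$ and $S_2$.

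The main obstacle is the order-$2$ assertion in (1) for a type \textbf{C} element $g$ whose Data $S_g$ differs from $S$: its natural lift to the fixed double cover $\tilde X=\tilde X_f$ need not fix the sections $S_\pm(f)$, so the \lemref{caseA} structure on $\tilde X_f$ does not directly apply. The remedy, sketched above, is to replace $\tilde X_f$ by $\tilde X_g$, for which the lift of $g$ is type \textbf{A} by Case \textbf{C}(g), so that commutation with the deck involution yields $g^2=id$.
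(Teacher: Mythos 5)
Your argument is correct and reaches both conclusions, but it organizes part (1) differently from the paper. The paper fixes the single cover $\tilde X_f$, writes down once and for all the descent relation between an automorphism's restrictions to the two fibers over a point of $S$ (its equation \eqref{116}, with the gluing $\alpha(z)=\nu/z$), and then in case (1) constrains the shape of every lift via \lemref{caseAC} applied to $\tilde X_f$ (each lift is $z\mapsto\lambda z$ or $z\mapsto\lambda/z$ with a global constant $\lambda$), so that \eqref{116} leaves only the four possibilities $\lambda=\pm1$, $\lambda=\pm\nu$; exponent $2$ and the count come out simultaneously. You instead prove exponent $2$ element by element, passing for each type-{\bf C} element $g$ to its own cover $\tilde X_g$ and playing the lift of $g$ (of type {\bf A}, $u\mapsto\lambda u$ with constant $\lambda$) against the deck involution, which swaps $S_+(g)$ and $S_-(g)$ and hence acts as $u\mapsto c/u$ between the two fibers; commutation gives $\lambda^2=1$. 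You then get the bound $|\Aut(X)_p|\le 4$ abstractly, from the embedding of \mbox{Case {\bf C}(h)} into $\PGL(2,\BC)$ and the fact that an elementary abelian $2$-subgroup of $\PGL(2,\BC)$ has order at most $4$. This is the same underlying commutation computation as the paper's, but your decomposition (order of each element first, then group theory in $\PGL(2,\BC)$) avoids invoking \lemref{caseAC} on $\tilde X_f$ and is arguably cleaner. Part (2) is essentially the paper's proof in a different normalization: the paper puts $S_\pm$ at $z=0,\infty$ so the deck map is $z\mapsto \nu/z$ and the sections come from $\{z=\pm1\}$, while you normalize the (constant) Möbius part of the deck involution to $M(z)=-z$ and read off $\Aut(X)_p$ as the centralizer of $M$, with the sections descending from $S\times\{0\}$ and $S\times\{\infty\}$; these are related by the coordinate change $w=(z+1)/(z-1)$ that the paper itself uses at the end.

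One point should be patched in (1): you cite \lemref{caseA} (the subgroup $\Gamma_A\cong\BC^*$) for $\tilde X_g$ over $S_g$, but that lemma assumes $\tilde X_g\not\sim S_g\times\BP^1$, which you have not checked for an arbitrary $g$. This is easily repaired in either of two ways: (i) note that the only facts you use --- existence of local coordinates with $S_+(g)=\{u=0\}$, $S_-(g)=\{u=\infty\}$ and $\tilde g(u)=\lambda u$ with $\lambda$ a global constant --- follow from Case {\bf C}(g) together with item (2) of the proof of \propref{fixedpoints}, with no triviality hypothesis on $\tilde X_g$; or (ii) observe that if $\tilde X_g\sim S_g\times\BP^1$ then, by your (independent) argument for part (2) applied to $g$, the bundle $p$ would have a section, contradicting \corref{csection}. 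Likewise, in (2) the identification $\Aut(X)_p\cong C_{\PGL(2,\BC)}(M)$ needs the one-line verification that every $\tilde p$-fiberwise automorphism commuting with the deck involution descends to an element of $\Aut(X)_p$ (the converse of \mbox{Case {\bf C}(i)}); you assert this at the outset and it is straightforward, but it should be said explicitly, as the paper does when it pushes $\tilde\Gamma\cup\tilde\Gamma'$ down to $X$.
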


\begin{proof}  Choose a point  $a\in S. $ Let $b=p(a)\in T.$ It means that $a$ is one of two points in $S\cap P_b.$
  The lift $\tilde f$ of $f$ onto $\tilde X $  has type {\bf A}, and for the corresponding line bundle $\tilde \LL_{\tilde f}$  
  we may assume that  $S_+$ is a zero section.  Let \begin{itemize}
 \item
  $\tilde U_i$ be the fine covering of $S;$
  \item  $\mu_{ij}$ be  transition functions of $\tilde \LL_{\tilde f}$  in $\tilde U_i\cap  \tilde U_j;$ 
   \item $\tilde V_i=\tilde p^{-1}(\tilde U_i)\subset \tilde X; $
  \item  $(u, z_i)$ be the local coordinates in $\tilde V_i$ such that $z_j=\mu_{ij}z_i$ in $\tilde U_i\cap \tilde  U_j.$
  \item  $a\in \tilde U_i, \ \invl(a)\in \tilde U_k$ and $\tilde U_k\cap \tilde U_i=\emptyset;$ 
  \item $b=p(a)=p(\invl(a))\in T.$
  \end{itemize}

 Since  $S_+$ is the  zero section, $z_i=0$  on $S_+\cap\tilde V_i, $  $z_i=\infty $  on $S_-\cap\tilde V_i, $  while $z_k=0$  on $S_+\cap\tilde V_k, $ and $z_k=\infty $  on $S_-\cap\tilde V_k.$
 We have 
 \begin{equation}\label{111}
 z_i(a,\invl(a))=\infty,  \ z_i(a,a)=0 ,  \end{equation}
  \begin{equation}\label{101}
  \ z_k(\invl(a),a)=\infty,  \ z_k(\invl(a),\invl(a))=0.  
 \end{equation}

 It may be demonstrated by the following diagram:
\begin{equation}\label{diagram4}
\begin{CD}
 P_b  @>{(a,id)}>>a\times P_b@>{z_i}>>\ov \BC_{z_i}\\
@V id VV@V{ }VV@V \alpha VV\\
P_b  @>{ (\invl(a),id)}>>\invl(a)\times P_b@>{z_k}>>\ov \BC_{z_k}
\end{CD}.\end{equation}
 Here the isomorphism    $\alpha:\ov\BC_{z_i}\to \ov\BC_{z_k}$    is defined in such a way that the diagram is commutative.

We get from \eqref{111}  and \eqref{101}  that 
$\alpha(0)=\infty, \ \alpha(\infty)=0.$ Hence  
$$
z_k=\alpha(z_i)=
\frac{\nu }{z_i}$$
for some $\nu\ne 0.$  By construction
\begin{equation}\label{114}p_X(\invl(a), \alpha(z_i))=p_X(a,z_i).\end{equation}

Consider an automorphism $h\in\Aut(X)_p.$  Let $\tilde h$ be its pullback to 
$\Aut(\tilde X)_{\tilde p}$  defined by $\tilde  h(s,x)=(s,h(x)).$ 
Let $n_1(z_i)=\tilde h \bigm |_{\tilde  P_a}, $  which  means that $h(a,z_i)=(a,n_1(z_i)).$ 
Let $n_2(z_k)=\tilde h \bigm |_{\tilde P_ {\invl(a)}}, $  which  means that $h( \invl(a),z_k)=(a,n_2(z_k)).$ 
 Choose in $P_b$ the  coordinate $z$  such that     $z_i=p-X^*(z),$  i.e., $  p_X(a,z_i)=(b, z_i)$  for a point $(a,z_i)\in\tilde P_a .$
 By construction,  $ z(a)=0,z(\invl(a))=\infty.$

 We have the following commutative diagram:
$$\begin{CD}
   P_b\ni     (b,z_i)  @>{(a,id)}>>(a,z_i)@>{\tilde h}>>(a,n_1(z_i))@>{p_X}>>(b,n_1(z_i))\in  P_b\\
@V id VV@V{ (\invl(a),\alpha) }VV@V {(\invl(a),\alpha)} VV@ |  \\
P_b \ni   (b,z_i) @>{ (\invl(a),id)}>>(\invl(a),\al(z_i))@>{\tilde h}>>(\invl(a),n_2(\al(z_i)))@>{p_X}>>(b,\al(n_1(z_i))\in  P_b
\end{CD}.$$

 Hence 
 
 \begin{equation}\label{116} \frac {\nu}{n_1(z_i)}=         \al(n_1(z_i))=n_2(\al(z_i))=n_2(\frac{\nu}{z_i}).\end{equation}
 
 (1) Assume that $\tilde X\not\sim S\times\BP^1$.
 
  It follows from  \eqref{onfibre}   and  the  proof of \lemref{caseAC}  
  (applied to  $\tilde X$)   that 
 for every $\tilde h \in \Aut(\tilde X)_{\tilde p}$  in every $U_j$ of our fine covering either 
  $\tilde h(s,z_j)=\lambda  z_j, $ 
  or $h(s, z_j)=\frac{\lambda }{ z_j}$ for some $\lambda\in\BC^*,$  and   $\lambda$ does not depend on $s$  or $j.$

Thus, one of following two conditions holds:

  (a)   $n_1(z_i)=\lambda z_i, \ n_2(z_k)=\lambda z_k,$ \  $z_k=\frac{\nu}{z_i}$  and from  \eqref{116} 
$$\frac {\nu}{\lambda z_i}=\lambda\frac{\nu}{z_i}$$  

(b)  $n_1(z_i)=\frac{\lambda}{ z_i}, \ n_2(z_k)=\frac{\lambda}{ z_k},$ $z_k=\frac{\nu}{z_i}$ and   from  \eqref{116} 
$$\frac {\nu z_i}{\lambda}=\frac {\lambda z_i}{\nu}.$$
In the former case $\lambda=\pm 1, $    in the latter  $\lambda=\pm \nu.$ Hence, at most 4 maps are possible. Clearly,  the 
 squares of all these maps are the identity map.

(2)  Assume that $\tilde X\sim S\times\BP^1$.
 Let $z:S\times \BP^1\to \BP^1\sim \ov\BC_z$ be the natural projection.  
Since $S_+=\{(s,s), \ s\in S\}$ and $S_-=\{(s,\invl(s)) \ s\in S\}$  have algebraic dimension 0, the rational function $z$
is constant along these  sections. 
We may assume  that  $z=0$ on $ S_+=\{(s,s)\}$ and $z=\infty$ on $S_-=\{(s,\invl(s)\}$   and all $z_j=z.$ Moreover, in this case $n_1(z):=n(z)=n_2(z)$ and 

\begin{equation}\label{th}  \tilde h(s,z)=(s,z'), \text{where} \  z':=n(z):=\frac{az+b}{cz+d}, \ a,b,c,d\in\BC.\end{equation}

On the other hand, it follows   from  \eqref{116}  that the map $\tilde h(s,z)$  defined by  \eqref{th} may be pushed down to $X$ if and only if 
$$\alpha(n(z))=n(\alpha(z)).$$
In expression $\alpha (z)=\frac{\nu}{z},$ we may assume  that $\nu=1.$  (Indeed, choose  a $\sqrt{ \nu}$  and divide  $z$ by it).
The map $\tilde h(s,z)$  defined by  \eqref{th} may be pushed down to $X$ if and only if 
\begin{equation}\label{down} \frac{a\frac{1}{z}+b}{c\frac{1}{z}+d}=\frac{cz+d}{az+b}.\end{equation}

 For every $(a:b)\in \BP^1, a^2-b^2\ne 0 ,   $     two  types  of $\tilde h$ with  property \eqref{down}  are  possible:
\begin{equation}\label{gamma} 
 z'=\frac{az+b}{bz+a}=\tilde  h_{a,b}(z), \end{equation}  and 

\begin{equation}\label{gamma1} 
 z'=-\frac{az+b}{bz+a}=-\tilde  h_{a,b}(z)=\tilde  h_{a,-b}(-z). \end{equation}

Note that the only nontrivial automorphism of $\tilde X$ leaving $z=0,z=\infty$ invariant is $-\tilde  h_{a,0},$
which is  the lift $\tilde f$ of $f.$
  All the transformations $ h_{a,b}$ form an abelian group $\tilde \Gamma$ with 
$$ h_{a,b} h_{\al,\be}=h_{c,d}, \ c=a\al+b\be,  \ \ d=a\be+b\al.$$
The transformations $ -h_{a,b}$ form  a coset $\tilde \Gamma'=-h(1:0)\tilde\Gamma.$ 
All the transformations from $\tilde \Gamma\cup\tilde \Gamma'$ may be pushed down to $X.$ We have:  $\Aut(X)_p$
is embedded into  $\Aut(\tilde X)_{\tilde p} $ and its image is $\tilde \Gamma\cup\tilde \Gamma'.$ 
Thus 
$\Aut(X)_p$ is the disjoint union of a subgroup $\Gamma$
and its coset $\Gamma'$   corresponding to $\tilde \Gamma$ and $\tilde \Gamma'$  respectively. 
The index of 
$\Gamma$   in   $\Aut(X)_p$   is 2.

 Note that the sets $\{z=1\}$ and $\{z=-1\}$ consist of  fixed points of all the maps $\tilde h_{a,b}$ if $b\ne 0.$ Moreover, they are invariant under deck transformation $(s,z)\to (\invl(s), \frac{1}{z}).$  
 Their images provide  two sections $S_1, S_2$
 of $\BP^1-$bundle $p:X\to T.$ 
Hence, in this case $X=\BP(E)$ for some decomposable rank two vector bundle $E$ over $T.$
If we change coordiantes $w=\frac{z+1}{z-1}$ then $w'=\frac{z'+1}{z'-1}=\tilde h_{a,b}(w)=w\frac{a+b}{a-b}:=w\mu_{a,b}$ and $ \tilde h_{a,b}(\tilde h_{\al,\be}w)$  corresponds to $\mu_{a,b}\mu_{\al,\be}.$  The condition $a^2-b^2\ne 0$ means that   $\mu\ne  0,\infty.$ Thus, $\Gamma\cong\BC^*$
 as a complex Lie group.
In coordinates $w$ we have $-\tilde h_{a,b}(w)=\frac{1}{w\mu_{a,b}}$  thus $\Gamma'$ consists of maps, interchanging the sections. 
\end{proof}.

 \begin{prop}\label{ap} Let $(X,p,T)$ be a $\BP^1-$bundle over a poor manifold $T.$ Then one of the following holds:
 \begin{enumerate}\item $X\sim T\times\BP^1;$
 \item $\Aut(X)_p$ has exponent at most 2 and consists of 1,2 or 4  elements; 
 \item $\Aut(X)_p\cong\BC^+;$ 
 \item $ \Aut(X)_p\cong\BC^*;$ 
 \item $\Aut(X)_p=\Gamma\sqcup\Gamma^{\prime}$ where $\Gamma\cong\BC^*$   is a complex Lie subgroup of 
 $\Aut(X)_p$ and $\Gamma^{\prime}$ is its coset in $\Aut(X)_p.$ \end{enumerate}\end{prop}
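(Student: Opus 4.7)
The plan is to organize a case analysis driven by \propref{fixedpoints}. If $X\sim T\times\BP^1$ alternative (1) holds trivially, so assume from now on that $X\not\sim T\times\BP^1$, which is the standing hypothesis of all the lemmas in \secref{p1}. Then every non-identity element of $\Aut(X)_p$ has one of the three types A, B, or C of \propref{fixedpoints}, and the goal is to match each combination of types that can occur to one of the five alternatives listed in the statement.

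The first split would be on whether $\Aut(X)_p$ contains any automorphism of type C. Suppose some $f\in\Aut(X)_p$ is of type C, with bisection $S$. When $\tilde X_f\not\sim S\times\BP^1$, \corref{csection} rules out the existence of any section of $p$, so no type-A or type-B element can be present; \lemref{Yf}(1) then directly yields alternative (2), with the exponent-at-most-$2$ group of order $1$, $2$ or $4$. When $\tilde X_f\sim S\times\BP^1$, \lemref{Yf}(2) provides the explicit description $\Aut(X)_p = \Gamma \sqcup \Gamma'$ with $\Gamma\cong \BC^*$, which is precisely alternative (5).

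In the complementary branch $\Aut(X)_p$ has no type-C element. If $\Aut(X)_p=\{id\}$ we land in (2) with a single element. If some type-B element is present, the last bullet of \lemref{caseB} forbids any type-A element, and the same lemma identifies $\Aut(X)_p$ with $\Gamma_B\cong \BC^+$, giving (3). If instead some type-A element $f$ is present, then the second alternative of \lemref{caseAC} is excluded (it would produce a type-C element, contrary to our branch assumption), so \lemref{caseAC}(1) forces $\Aut(X)_p = \Gamma_A\cong \BC^*$, which is (4). Since types A and B cannot coexist, these three subcases partition the no-type-C branch.

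All of the technical content is already carried by the preceding results in \secref{p1}: \propref{fixedpoints} supplies the typology, \lemref{caseA}, \lemref{caseB}, \lemref{caseAC} handle the branches where type A or B appears, and \lemref{Yf} together with \corref{csection} handle the type-C branch. The proposition is in essence an organizational summary, and the only real care needed is to track the hypothesis $X\not\sim T\times\BP^1$ through each invoked lemma and to apply the dichotomies of \lemref{caseAC} and \lemref{Yf} in the right branch. The principal (modest) obstacle is therefore bookkeeping: enumerating the admissible combinations of the types A, B, C that can occur simultaneously in $\Aut(X)_p$ and verifying that each falls into exactly one of the five alternatives (1)--(5).
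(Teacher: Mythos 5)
Your proposal is correct and is essentially the paper's own argument: both reduce the proposition to the same lemmas (\propref{fixedpoints}, \lemref{caseA}, \lemref{caseB}, \lemref{caseAC}, \corref{csection}, \lemref{Yf}) applied after excluding $X\sim T\times\BP^1$. The only (cosmetic) difference is that the paper organizes the case tree by the number of sections of $p$ (none, one, or two), whereas you branch first on the presence of a type {\bf C} automorphism and then on the triviality of $\tilde X_f$; the resulting applications of the lemmas are identical.
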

 \begin{proof}  We use the following: assume that $X\not \sim T\times\BP^1$  and $f\in\Aut(X)_p, \ f\ne id.$  Then \begin{itemize}
 \item $f$ being of type {\bf A} implies the existence of exactly two sections of $p$    (see Case {\bf A});
 \item   $f$ being of type {\bf B} implies the existence of exactly one section of $p$  (see Case {\bf B}); 
 \item  $f$ being of type {\bf C} implies the existence of either no    or  exactly two sections of $p$   (see Case {\bf C}  
 and   \lemref{Yf}).\end{itemize}
 Consider  the  cases.

 (2) If $X$ contains no sections of $p$ then either $\Aut(X)_p=\{id\}$  or there is $f\in \Aut(X)_p$ of type {\bf C}.  Let $S$ be a bisection of $p$
 that is the fixed point set of $f.$  The corresponding to $f$ double cover $\tilde X_f$ of $X$  cannot be isomorphic to $S\times\BP^1$  by \lemref{Yf} (2),  since there are no sections of $p.$  Thus, by \lemref{Yf} (1), 
 $\Aut(X)_p$ has exponent at most 2 and consists of 2 or 4  elements.
 
 (3) Assume that $X$ contains exactly one section $S$ of $p.$ Then $\Aut(X)_p=\{id\}$  or there are 
 non-identity
 $f\in \Aut(X)_p$  of type  {\bf B}  only.   Then $\Aut(X)_p\cong\BC^+$ by  combination of  \lemref{caseB}, \lemref{csection},   and  \lemref{Yf}.   

(4)  Assume that $X$ contains exactly two sections  $S_1$ and $S_2$ of $p.$  Then there are two options:\begin{enumerate}\item  $\Aut(X)_p$  consists of 
automorphisms of type  {\bf A } only (except $id$)  and   $\Aut(X)_p\cong \BC^*$  according to \lemref{caseA};
\item $\Aut(X)_p$  contains 
automorphisms of type  {\bf A }  and  {\bf C}.   By \lemref{Yf},   $\Aut(X)_p=\Gamma\sqcup\Gamma'$ where $\Gamma\cong\BC^*$ is  a
 complex Lie subgroup of 
 $\Aut(X)_p$  consisting in  those maps that fix $S_1$ and $S_2, $  and $\Gamma'$ is its coset in $\Aut(X)_p$ 
 that consists in maps that interchange the sections.\end{enumerate}\end{proof}

\begin{remark}\label{open}  Let us formulate a  byproduct of the proof of 
 \propref{fixedpoints}.  Assume that $(V,p,U) $ is a $\BP^1-$bundle over   a connected complex (not necessarily   compact)   manifold $U,$ and  let $f\in\Aut(V)_p
 , f \ne id.$   Then 
\begin{itemize}\item  
the function $\TD(u)$ is globally defined;
\item If $\TD(u)= \mathrm{const}\ne 4$  on $U$ then  the set of fixed points of $f$ is an unramified (may be reducible) double cover of $U;
$\item   If $\TD(u)\equiv  4$  on $U$ and $U$ contains no  analytic  subset of  codimension 1, then  the set of fixed points of $f$ is a  section of $p.$
\end{itemize}
\end{remark}
\section { $\BP^1-$bundles over poor K\"{a}hler manifolds} \label{abelian}

In this section we continue to consider a triple $(X,p ,T)$  that is a $\BP^1-$bundle   over a 
poor manifold   $T.$  Further on we assume that $T$ is a   {\bf  K\"{a}hler manifold.}
Recall that this means that \begin{itemize}\item $X$ and $T$ are connected complex compact manifolds;
\item $T$ contains no rational curves and no  analytic subspaces  of codimension 1  (in particular, $a(T)=0$);
\item $T$ is K\"{a}hler; 
\item $p:X\to T$ is a surjective holomorphic map;\item $X$ is   holomorphically locally trivial fiber bundle over $T$ with fiber $\BP^1$ and projection $p.$
\end{itemize}
\begin{Lemma}\label{  Kaehler  } If $T$ is a  poor   K\"{a}hler manifold and $\Aut(X)_p\ne\{id\}$ then $X$ is a   K\"{a}hler   manifold.\end{Lemma}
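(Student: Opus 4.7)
The plan is to apply \propref{ap}, which partitions the possibilities for $\Aut(X)_p$ into five cases, and to reduce \textbf{(2)} to cases \textbf{(3)--(5)} via an unramified double cover. In case \textbf{(1)} we have $X\sim T\times\BP^1$, which is obviously K\"{a}hler as a product of K\"{a}hler manifolds. In cases \textbf{(3)}, \textbf{(4)}, \textbf{(5)}, \lemref{caseA} and \lemref{caseB} guarantee that $p:X\to T$ admits at least one holomorphic section, so the Brauer obstruction vanishes and $X\cong\BP(E)$ for some holomorphic rank $2$ vector bundle $E$ on $T$. I would then invoke the classical fact that the projectivization of a holomorphic vector bundle over a compact K\"{a}hler manifold is K\"{a}hler: fix a Hermitian metric $h$ on $E$, let $\eta$ be the induced closed $(1,1)$-form representing $c_1(\mathcal O_{\BP(E)}(1))$, which is positive along each fibre $P_t\sim\BP^1$, and then $\eta+k\,p^*\omega_T$ is a K\"{a}hler form on $X$ for a sufficiently large real $k>0$, where $\omega_T$ is a fixed K\"{a}hler form on $T$.

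The remaining case is case \textbf{(2)} of \propref{ap} with $\Aut(X)_p$ of cardinality $2$ or $4$. By \lemref{Yf}(1) together with \corref{csection}, every non-identity element of $\Aut(X)_p$ is then of type \textbf{C}, and $p:X\to T$ admits no sections at all. Pick such an $f\in\Aut(X)_p$, let $S$ be its fixed bisection, and form the associated $\BP^1$-bundle $\tilde p:\tilde X_f\to S$ from Case \textbf{C}. By \lemref{cover}, $S$ is itself a poor K\"{a}hler manifold (being an unramified finite cover of $T$), and $\tilde X_f\to S$ carries the two sections $S_+,S_-$ constructed there. Hence, by the argument of the previous paragraph applied to $\tilde X_f\to S$, the total space $\tilde X_f$ is a compact K\"{a}hler manifold. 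Finally, $p_X:\tilde X_f\to X$ is an unramified holomorphic double cover with a free deck involution, so averaging any K\"{a}hler form $\tilde\omega$ on $\tilde X_f$ over this involution yields an invariant K\"{a}hler form that descends to a K\"{a}hler form on $X$.

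The principal technical input is the classical statement that projectivizations of holomorphic vector bundles over compact K\"{a}hler bases are K\"{a}hler, together with the averaging trick for free finite group actions. The main conceptual obstacle is organizing the double-cover reduction in case \textbf{(2)} and keeping track of which base (either $T$ or $S$) we work over at each step; however, since $S$ is again a poor K\"{a}hler manifold by \lemref{cover} and $\tilde X_f\to S$ is a genuine holomorphically locally trivial $\BP^1$-bundle as a pullback of such along a finite \'etale cover, the reduction to the situation of cases \textbf{(3)--(5)} applied to $\tilde X_f$ is clean and the proof goes through.
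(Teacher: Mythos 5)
Your proof is correct and takes essentially the same route as the paper's: either $p$ admits a section and $X\cong\BP(E)$ is K\"{a}hler by the standard projectivization fact (the paper cites Voisin, Proposition 3.18), or a type \textbf{C} automorphism produces the \'etale double cover $\tilde X_f\cong\BP(E)$ over the poor K\"{a}hler manifold $S$ (K\"{a}hler by \lemref{cover}) and K\"{a}hlerness descends to $X$. The only differences are presentational: you organize the cases through \propref{ap} rather than directly through the type \textbf{A}/\textbf{B}/\textbf{C} trichotomy, and you spell out the averaging of a K\"{a}hler form over the free deck involution to descend it to $X$, a step the paper leaves implicit.
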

\begin{proof}  Let $f\in \Aut(X)_p, f\ne id.$  Then either $X$ or its \`{e}tale double cover $\tilde X$ is 
$\BP (E)$ where $E$ is a holomorphic rank two vector bundle over a   K\"{a}hler   manifold $T$ or its double cover, respectively, (that is also   K\"{a}hler, see \lemref{cover}).
In both  cases $X$ is  K\"{a}hler   according to (\cite {Voisin}, Proposition  3.18). \end{proof}

\begin{Corollary}\label{Jordan}  $\Bim(X)=\Aut(X)$ is Jordan. \end{Corollary}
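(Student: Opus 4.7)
The plan is to split into two cases according to whether the fiberwise subgroup $\Aut(X)_p$ is trivial or not, and in each case reduce the Jordan property to Kim's theorem \cite{Kim} cited in the introduction, which asserts that the automorphism group of any compact Kähler manifold is Jordan.

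In the case $\Aut(X)_p \neq \{id\}$, the immediately preceding \lemref{  Kaehler  } tells us that $X$ itself is a compact Kähler manifold. Kim's theorem then yields that $\Aut(X)$ is Jordan, and combining with the equality $\Bim(X)=\Aut(X)$ from \corref{bimaut} finishes this case. In the remaining case $\Aut(X)_p=\{id\}$, the kernel of the homomorphism $\tau:\Aut(X)\to\Aut(T)$ constructed in \corref{tauh} is trivial, so $\tau$ embeds $\Aut(X)$ into $\Aut(T)$. Since $T$ is a \emph{poor Kähler manifold} by assumption of this section, Kim's theorem applies to $T$ and shows that $\Aut(T)$ is Jordan; by item 4) of \remarkref{vstavka1} (or directly from the definition), a subgroup of a Jordan group is Jordan, so $\Aut(X)$ is Jordan as well.

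There is essentially no obstacle here: the non-trivial content has already been absorbed into \lemref{  Kaehler  } (Kähler-ness of $X$ when it admits a nontrivial fiberwise automorphism) and \corref{bimaut} ($\Bim(X)=\Aut(X)$). The only thing to notice is that even when $X$ itself is not known to be Kähler (which could a priori happen if $\Aut(X)_p=\{id\}$), the group $\Aut(X)$ still sits inside the Kähler automorphism group $\Aut(T)$, so Kim's theorem can be applied on the base.
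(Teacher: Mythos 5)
Your proof is correct, and its core is the same as the paper's: reduce everything to Kim's theorem on Jordanness of automorphism groups of compact K\"ahler manifolds, using \corref{bimaut} to identify $\Bim(X)$ with $\Aut(X)$. The difference is that the paper's proof is a one-liner ("follows from the result of Kim"), which implicitly leans on \lemref{  Kaehler  } and hence only literally covers the situation $\Aut(X)_p\ne\{id\}$, where $X$ is known to be K\"ahler; you make the argument complete by also treating the case $\Aut(X)_p=\{id\}$, observing that then $\tau$ embeds $\Aut(X)$ into $\Aut(T)$, that $T$ is compact K\"ahler by hypothesis, and that the Jordan property passes to subgroups. This extra case split costs nothing and actually fills in a step the paper leaves unstated, so your version is, if anything, slightly more careful than the original; no gap.
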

\begin{proof} The statement  follows from the result of  Jin Hong   Kim, \cite{Kim}.
\end{proof}


\begin{Lemma}\label{commutative}   Consider  a short exact sequence  of connected complex Lie groups:
$$0\to A\overset {i}{\rightarrow} B\overset {j}{\rightarrow} D\to 0.$$ Here $i$ is a closed holomorphic embedding and $j$ is surjective holomorphic. Assume that $D$ is a complex  torus and $A$ is isomorphic as a group Lie either to $  \BC^+$ or to $\BC^*.$  Then $B$ is commutative.\end{Lemma}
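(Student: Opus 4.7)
The plan is to proceed in two stages: first I would show that $A$ lies in the center $Z(B),$ and then trivialize the induced commutator pairing on the quotient $D$ by a compactness argument.

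For Stage $1$ (centralization): since $A=i(A)$ is normal in $B,$ conjugation will define a holomorphic homomorphism $\phi\colon B\to\Aut^{\mathrm{gr}}_{\mathrm{hol}}(A)$ into the group of biholomorphic group automorphisms of $A.$ As $A$ is abelian, $\phi|_A$ is trivial, so $\phi$ descends to a holomorphic homomorphism $\ov\phi\colon D\to\Aut^{\mathrm{gr}}_{\mathrm{hol}}(A).$ The target is easy to identify: for $A\cong\BC^+$ the biholomorphic group automorphisms are exactly the dilations $z\mapsto\la z,$ $\la\in\BC^*,$ so the target is $\BC^*;$ for $A\cong\BC^*$ the only such automorphisms are $z\mapsto z^{\pm 1},$ so the target is $\BZ/2\BZ.$ In the first case I will compose $\ov\phi$ with the inclusion $\BC^*\hookrightarrow\BC$ to obtain a holomorphic function on the compact connected complex manifold $D,$ which must be constant and equal to $\ov\phi(e)=1;$ in the second case I will use that $D$ is connected and the target is discrete. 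Either way $A\subset Z(B).$

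For Stage $2$ (commutator pairing): once $A$ is central, the identities $[ab_1,b_2]=[b_1,b_2]=[b_1,b_2a]$ for $a\in A$ will show that the commutator map $(b_1,b_2)\mapsto b_1b_2b_1^{-1}b_2^{-1}$ descends to a holomorphic map $\ov c\colon D\times D\to A.$ Composing $\ov c$ with a holomorphic embedding $A\hookrightarrow\BC$ (trivial for $A=\BC^+,$ the natural inclusion $\BC^*\hookrightarrow\BC$ for $A=\BC^*$) will turn it into a holomorphic function on the compact connected complex manifold $D\times D,$ hence a constant by the maximum principle. Evaluating at $(e,e)$ forces this constant to be the identity of $A,$ so $[B,B]=\{1\}$ and $B$ is commutative.

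The main obstacle is Stage $1,$ namely the careful identification of $\Aut^{\mathrm{gr}}_{\mathrm{hol}}(A)$ (being a biholomorphic \emph{group} automorphism is a stronger requirement than being merely biholomorphic) together with the verification that a holomorphic homomorphism from the compact connected complex Lie group $D$ into this target must be trivial. Once $A$ has been placed in the center, Stage $2$ reduces to the fact that holomorphic functions on a compact complex manifold are constant and is essentially automatic.
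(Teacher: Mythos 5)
Your proposal is correct and follows essentially the same two-step route as the paper: first centralizing $A$ by descending the conjugation action to a holomorphic map from the torus $D$ into the group of holomorphic group automorphisms of $A$ (necessarily trivial), then descending the commutator map to a holomorphic map $D\times D\to A$ that is constant by compactness and equal to the identity. Your explicit identification of the automorphism groups of $\BC^+$ and $\BC^*$ merely spells out a point the paper leaves implicit.
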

\begin{proof}  
   {\bf Step 1.}  First, let us prove    that $A$ is a central subgroup  in B. Take any element $b\in  B.$ 
 Define  a holomorphic map $\phi_ b:A\to A, \ \phi_b(a)= bab^{-1}\in A$
for an element $a\in  A.$  Since  it depends holomorphically on $b,$  
we have  a holomorphic map $\xi :B\to \Aut(A),  b  \to \phi_b.$

 Since $A$ is commutative, for every $a\in A$
we have $\phi_{a b}=\phi_b.$ Thus   there is a well defined map $\psi$ fitting into the following commutative diagram \begin{equation}\label{kawamata}
\begin{aligned}
&        &   &                 &  {B}                              &                  &      &             & \notag \\
&        &j  & \swarrow  &                                        &{\searrow} &\xi   &               &  \notag \\
& D&   &                &\stackrel{\psi}{\rightarrow} &                  &       &\Aut(A)&
      \end{aligned},  \end{equation}
The map $\psi=\xi\circ j^{-1}$ is  defined at every point of $D.$  
 It is holomorphic (see, for example, \cite{OV}, \$ 3).
Since $D$ is a complex torus,   we have $\psi(D)$ is $\{id\}.$  It follows that $A$ is a central subgroup of $B.$

     {\bf Step 2.}   Let us prove  that $B$ is commutative.  Consider a  holomorphic  map  $\mathrm{com} :B\times B\to A$  defined by 
$\mathrm{com}(x,y)=xyx^{-1}y^{-1}.$  Since $A$ is a central subgroup of $B, $  similarly to    {\bf Step 1}
we get a holomorphic map  $D\times D\to A.$  It has to be constant since $D$ is a complex  torus and  $A$ is either $\BC^+$ or $\BC^*.$\end{proof}

\begin{Theorem}\label{main1} 
Let $X$  be a  $\BP^1$-bundle over a  K\"{a}hler poor manifold  $T$ and $X\not\sim  T\times \BP^1.$
 Then    the connected identity component $ \Aut_0(X)$ of $\Aut(X)$  is commutative and  the quotient $\Aut(X)/\Aut_0(X)$  is a bounded group.
\end{Theorem}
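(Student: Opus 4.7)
My plan is to use the fiberwise homomorphism $\tau\colon \Aut(X)\to \Aut(T)$ from \corref{tauh}, together with the classification of $\Aut(X)_p$ in \propref{ap}. Since $T$ is a poor K\"ahler manifold, it is meromorphically hyperbolic and so, by Fujiki's results cited in \secref{prel}, $\Aut_0(T)$ is a complex torus; by \propref{abounded}, $\Aut(T)/\Aut_0(T)$ is bounded. By \remarkref{Fu}, $\tau|_{\Aut_0(X)}$ is a holomorphic morphism of complex Lie groups whose image $\tau(\Aut_0(X))$ is a complex Lie subgroup of $\Aut(T)$. Being connected, this image lies in $\Aut_0(T)$, and as a closed connected complex Lie subgroup of a complex torus it is itself a complex torus, which I will denote by $H$.

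Next I would analyze the kernel $K := \Aut_0(X) \cap \Aut(X)_p$ of $\tau|_{\Aut_0(X)}$. Its identity component $K_0$ coincides with the identity component $\Aut(X)_{p,0}$ of $\Aut(X)_p$: indeed $K_0\subseteq \Aut(X)_{p,0}$ because $K_0$ is connected and sits inside $\Aut(X)_p$, while conversely $\Aut(X)_{p,0}\subseteq K$ since it is connected and contains the identity of $\Aut(X)$, hence lies in $\Aut_0(X)$. By \propref{ap} (using the hypothesis $X \not\sim T\times \BP^1$), $K_0$ is one of $\{id\}$, $\BC^+$, or $\BC^*$. Since $\Aut(X)_p$ has only finitely many connected components by the same proposition, $K/K_0$ is finite, so $\Aut_0(X)/K_0$ is a connected finite cover of $H$ and therefore itself a complex torus. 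Feeding the short exact sequence
\[
1 \to K_0 \to \Aut_0(X) \to \Aut_0(X)/K_0 \to 1
\]
into \lemref{commutative} then shows that $\Aut_0(X)$ is commutative; when $K_0 = \{id\}$ the conclusion is already immediate, as $\Aut_0(X)$ is then itself a complex torus.

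For the boundedness of $\Aut(X)/\Aut_0(X)$ my default is \propref{abounded}, which requires $X$ to be K\"ahler. If $\Aut(X)_p \neq \{id\}$, K\"ahlerness of $X$ is supplied by \lemref{  Kaehler  }, and \propref{abounded} concludes the argument. If instead $\Aut(X)_p = \{id\}$, so that $\tau$ is injective on all of $\Aut(X)$, I would argue directly: $\tau(\Aut(X))$ is a Lie subgroup of $\Aut(T)$ whose intersection with $\Aut_0(T)$ has identity component equal to $\tau(\Aut_0(X))$, and combining the bound on $\Aut(T)/\Aut_0(T)$ with the fact that any closed Lie subgroup of the compact Lie group $\Aut_0(T)$ has only finitely many connected components gives the bound on $\tau(\Aut(X))/\tau(\Aut_0(X))\cong \Aut(X)/\Aut_0(X)$.

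The main obstacle, as I see it, lies in pinning down $\Aut_0(X)/K_0$ as a genuine complex torus. This rests on carefully verifying that $K_0$ equals $\Aut(X)_{p,0}$ and that $K/K_0$ is finite, drawing on the full five-part classification of \propref{ap}; once these are in hand, \lemref{commutative} delivers commutativity and the remaining work is essentially bookkeeping with \propref{abounded} and \lemref{  Kaehler  }.
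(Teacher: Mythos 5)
Your overall route is the paper's own: push $\Aut_0(X)$ down to $T$ via $\tau$ (\lemref{tauh}, \remarkref{Fu}), classify the fiberwise kernel by \propref{ap}, feed the resulting extension into \lemref{commutative}, and get boundedness of $\Aut(X)/\Aut_0(X)$ from \propref{abounded} together with the K\"ahlerness statement at the start of \secref{abelian}. Your one genuine improvement is the uniform treatment of the kernel: setting $K_0$ equal to the identity component of $\Aut(X)_p\cap\Aut_0(X)$ (which, as you correctly check, is the identity component of $\Aut(X)_p$, hence $\{id\}$, $\BC^+$ or $\BC^*$ by \propref{ap}) and absorbing the finite group $K/K_0$ into the quotient lets you skip the paper's case-by-case identification of $\Aut_0(X)\cap\Aut(X)_p$ --- in particular the geometric argument of Case 3 that $\Aut_0(X)$ preserves each of the two sections, and the Case 2 observation that the preimage $G$ of $H$ is connected and equals $\Aut_0(X)$.

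There is, however, a real gap at the step you treat as automatic: you assert that $H=\tau(\Aut_0(X))$ is a \emph{closed} connected complex Lie subgroup of the torus $\Aut_0(T)$ and hence itself a complex torus. Neither connectedness nor \remarkref{Fu} as you use it gives closedness: a connected complex Lie subgroup of a complex torus may be a non-closed dense wind (the image of a one-parameter subgroup), in which case it is not compact, and then your key claim that $\Aut_0(X)/K_0$ is a connected finite cover of a complex torus --- the hypothesis needed for \lemref{commutative} --- has no foundation. This is exactly the point where the paper invokes the Fujiki--Lieberman decomposition \eqref{Chevalley} via diagram \eqref{Chevalley2}: $\tau$ kills the linear part $L(X)$, so $H$ is the image of the \emph{compact} group $\Tor(X)\cong \Aut_0(X)/L(X)$ under a holomorphic homomorphism, hence compact, hence a subtorus of $\Tor(T)$. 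Your proof needs this (or an equivalent compactness argument) spelled out. A similar unjustified closedness assertion occurs in your separate treatment of the case $\Aut(X)_p=\{id\}$: the subgroup $\tau(\Aut(X))\cap\Aut_0(T)$ need not be closed in $\Aut_0(T)$, and a non-closed subgroup of a torus can have arbitrarily large finite subgroups modulo its identity component, so the ``finitely many components'' argument does not apply as stated; the paper itself simply applies \propref{abounded} to $X$, with K\"ahlerness supplied when $\Aut(X)_p\ne\{id\}$, so if you wish to treat that degenerate case separately you need an actual argument there rather than the compactness heuristic.
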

\begin{proof}  

  From   equation \eqref{Chevalley}, applied to $X$ and $T,$  combined with \lemref{tauh}  and \remarkref{Fu}, we get the following commutative diagram 
  of  complex Lie groups and their   holomorphic homomorphisms.
 
\begin{equation}\label{Chevalley2} 
\begin{aligned}
&    0    &  \to &    L(X)           &  \to    &  \Aut_0(X)   & \to     &   \Tor(X)          &\to &0& \notag \\
&          &       & \downarrow &           & \downarrow\tau &    &   \downarrow&    &  &  \notag \\
& 0       &  \to & \   0             &\to         &  \Aut_0(T)  &    \cong  &    \Tor(T)         &\to &0&    \end{aligned},  \end{equation}
 
 Let us  identify a complex  torus with the group of its translations  and   put
$H:=\tau(\Aut_0(X)).$  Then $H$ is the image of a
  complex  torus  $\Tor(X)\cong \Aut_0(X)/  L(X) $ under a holomorphic homomorphism,  thus is a complex subtorus of $\Tor(T).$  
    Let $G$ be the preimage of $H$ in $\Aut(X), $ with respect to 
  $\tau: \Aut(X) \to \Aut(T)$. By definition, $G$ is a a complex Lie group that contains $\ker(\tau)=\Aut(X)_p$ as a closed complex Lie subgroup.
Since $\Aut_0(X)\subset G\subset\Aut(X),$ the identity connected component of $G$ coincides with  $\Aut_0(X).$

One has the following short exact sequences  of complex Lie groups.
\begin{equation}\label{short}    1\to \Aut(X)_p \to G  \overset{\tau}{\to}   H\to 1.\end{equation}
\begin{equation}\label{short1}    1\to (\Aut(X)_p\cap\Aut_0(X))\to \Aut_0(X) \overset{\tau}{\to}   H\to 1.\end{equation}
 According to \propref{ap} only the following cases may occur.

{\bf Case  1.}  $\Aut(X)_p$ is finite. Then   $\Aut(X)_p\cap\Aut_0(X)$  is finite as well, hence 
$\Aut_0(X)  \to   H$
 is a  surjective holomorphic homomorphism   of  connected complex Lie groups with finite kernel, thus an unramified finite covering (\cite{OV}, \$4.3). It follows that $\Aut_0(X)$ is a complex torus, hence commutative.

{\bf Case  2.} $\Aut(X)_p\cong \BC^+$ or     $\Aut(X)_p\cong \BC^*.$ 
 In this case in short exact sequence \eqref{short}   both $H$ and $\Aut(X)_p$  are connected. This implies that $G$ is connected, hence $G=
 \Aut_0(X). $ According to     \lemref{commutative} 
 $\Aut_0(X)=G$ is commutative.

{\bf Case  3.}  $\Aut(X)_p$ has a closed subgroup $\Gamma\cong \BC^*$   of index 2.  According to \lemref{Yf} and \propref{ap}
it happens when 
  $X$ admits precisely two 
 sections $S_1, S_2$   of $p$,
and these sections are disjoint. In addition, all
automorphisms  $ f \in \Gamma$  leave  invariant these  sections   as subsets of  $X.$
  As  for automorphisms from coset $\Gamma^{\prime}=\Aut(X)_p\setminus \Gamma$ of $\Gamma$, they interchange $S_1$ and $S_2$.

Let show that in this case \begin{equation}\label{gamma100}
\Aut(X)_p\cap\Aut_0(X)=\Gamma. \end{equation}  We will use the following. 

{\bf (a).}  Every automorphism $f\in\Aut(X)$ 
moves a section $S$ of $p$ to a section of $p.$  Indeed,  since $f$ is $p-$fiberwise,  for every $t\in T$ 
we have $$f(S\cap  P_t)=f(S)\cap P_{\tau(t)}.$$
Thus,   since $S$ meets every fiber at  one point, the same is valid for $f(S).$  Since there are only two sections of $p,$ 
\begin{equation}\label{ss} f(S_1\cup  S_2)=S_1\cup S_2.\end{equation}

{\bf (b).}   The action      $\Aut(X)\times X\to X,  \ (f,x)\to f(x),$ on $X$ is holomorphic,    hence continuous.   Thus the
 image   $S$ of a connected set
 $\Aut_0(X)\times S_1$ in $X$ is connected. Since sections $S_1, S_2$ are disjoint, from \eqref{ss}  follows    that 
  $S=S_1$ or  $S=S_2.$  On the other hand, 
 $id\in  \Aut_0(X).$  It follows that $f(S_1)=S_1, \ f(S_2)=S_2$   for every $f\in \Aut_0(X),$ and  
 $\Gamma^{\prime}\cap\Aut_0(X)=\emptyset. $  That proves \eqref{gamma100}
 
 Now\eqref{short1} maybe rewritten as   a short exact sequence
of holomorphic maps of complex Lie groups.

\begin{equation}\label{gamma300}
1\to \Gamma\to  \Aut_0(X)\to H\to 1, \text{  where }\Gamma\cong\BC^*.\end{equation}

  \lemref{commutative} implies that $ \Aut_0(X)$ is commutative. 

Cases {\bf1-3}  give us that $\Aut_0(X)$ is commutative.  
The group $F:=\Aut(X)/\Aut_0(X)$ is bounded  according  to \propref{abounded}.

\end{proof}

Now \thmref{IntroMain1}  follows  from combination of 
\propref{p1bundle},
 \corref{bimaut},
\propref{fixedpoints},
\thmref{main1},  Equations \eqref{short}, \eqref{gamma300}  and 
 \propref{ap}.

\section{Examples of  $\BP^1$-bundles without  sections.}\label{example}

If $S$ is a complex manifold then we write $\mathbf{ 1}_S$ for the trivial line bundle $S\times \BC$ over $S$.

In this section we construct a $\BP^1$-bundle  $(X,p,T)$
 such that\begin{itemize}\item    $T$ is a   complex torus with $\dim(T)=n\ge 2,  \ a(T)=0;$
\item projection $p:X\to T$ has no section, i.e., there is no  divisor
 $ \Delta \subset X$ that meets every fiber $P_t$ at a single point;
 \item  $\Aut(X)_p$ contains no  automorphisms of type {\bf A} or {\bf B};
 \item  $\Aut(X)_p$  contains an  automorphism of type  {\bf C}; 
 \item there exists a bisection of $p$  
 that intersects every fiber $P_t$ at two 
 distinct
  points.\end{itemize}

 We use the fact that 
 distinct 
 sections of a $\BP^1-$bundle over a torus  $T$  with $a(T)=0$ do not intersect,
 thus our example is impossible with $\dim(T)=1.$

 Let $S$ be a torus with    $$\dim(S)=n\ge 2,  \ a(S)=0.$$
   Let $\LL$ be a {\sl nontrivial} holomorphic line bundle over $S$ such that
 \begin{enumerate}
  \item $\LL\in \Pic_0(S)$;
  \item $\LL^{\otimes 2}=\mathbf{ 1}_S.$ 
  \end{enumerate}
Let $Y $ be the  total body of $\LL$ and $q:Y\to S$ the corresponding surjective holomorphic map.  
Consider   the   rank two vector bundle $E:=\LL\oplus \mathbf{ 1}_S$ on $S$    and let  $\ov   Y =\BP(E)$ be the projectivization of $E,$ let $\ov q:\ov Y \to S$ the holomorphic extension of $q$ to  $\ov   Y .$   
The holomorphic map $\ov q$  has precisely two sections, namely,  $D_0$ that is the zero  section of $\LL$ and $D_\infty=\ov   Y
\setminus Y.$  Since $\LL$  is  a nontrivial line bundle  and $a(S)=0,$ there are no other sections of $\ov q.$  

We may describe $Y$ in the following way (see \cite[Ch. 1, Sect. 2]{BL}). Let $S=V/\Gamma, $  where $V=\BC^n$ is  n-dimensional complex  vector space, $n=\dim(S)$ and $\Gamma$ is a discrete lattice of rank $2n$. 
Then   there exists a {\sl nontrivial} group homomorphism 
$$\xi  :\Gamma\to \{\pm 1\}\subset \BC^{*}$$
 such that $Y$ is the quotient  $(V\times \BC)/\Gamma$ 
with respect to the action of group $\Gamma$  on $V\times \BC$  by automorphisms 
\begin{equation}\label{g}  
g_{\gamma}: (v,z)=(v+\gamma, \xi(\gamma)z) \ \forall \gamma \in\Gamma,  (v,z)\in V\times \BC_z .\end{equation} 
We may extend the action of $\Gamma$ to $V\times\BP^1=V\times \bar{\BC}_z$ by the same formula \eqref{g}  and get $\bar{Y}=(V\times \bar{\BC}_z)/\Gamma$.

 Let us consider the following three holomorphic automorphisms of $\ov Y.$
 
 1)  The line bundles $\LL$ and $\LL^{-1}$ are isomorphic. Hence, there is a holomorphic {\sl involution map} $I_L:\ov Y\to \ov Y$ such that  $I_L(D_0)=D_\infty$ and $I_L\circ I_L=id.$   Automorphism $I_L$  may be included into the commutative diagram 
\begin{equation}\label{diagram100}
\begin{CD}
\ov Y@>{ I_L}>>\ov Y\\
@V \ov q VV @V\ov qVV \\
S  @>{ id}>> S
\end{CD}.
\end{equation}
In order to define $I_L$ explicitly, let us consider a holomorphic involution
\begin{equation}
\label{tildeL}
\tilde{I}_L: V\times \bar{\BC}_z \to V\times \bar{\BC}_z, \ (v,z) \mapsto  ( v, \frac{1}{z }).
\end{equation}
We have for all $\gamma\in\Gamma$
$$g_{\gamma}\circ  \tilde{I}_L (v,z)=(v+\gamma, \xi(\gamma)\cdot \frac{1}{z}),$$
 $$ \tilde{I}_L\circ   g_{\gamma}(v,z)=(v+\gamma, \frac{1}{\xi(\gamma)z})=g_{\gamma}\circ  \tilde{I}_L (v,z),$$
 since $\xi(\gamma)^2=1.$ In other words, $ \tilde{I}_L$ commutes with the action of $\Gamma$ and therefore descends  to the holomorphic involution of $(V\times \bar{\BC}_z)/\Gamma=\bar{Y}$ and this involution is our $I_L$.

 2) Let us choose $\gamma_0\in \Gamma$ such that
 \begin{equation}
 \label{gamma0}
 \gamma_0\not\in 2\Gamma, \ \xi(\gamma_0)=1.
 \end{equation}
 (Such a $\gamma_0$ does exist, since the rank of $\Gamma$ is greater than $1$.) Let us put
 $$v_0:=\frac{\gamma_0}{2}\in \frac{1}{2}\Gamma \subset V$$
 and consider an order 2 point
  $$P:=v_0+\Gamma\in V/\Gamma = S.$$ 
  Then the translation map
  $$\TT_P: S \to S, \ s\mapsto s+P$$
   is a holomorphic involution on $S:  \TT_P^2=id$. Since $\LL \in \Pic^0(S)$, 
  translation $\TT_P$ induces a holomorphic involution   $I_P:\ov Y\to\ov Y$ \cite{Zar19}  that ``lifts'' $\TT_P$ and leaves $D_0$ and $D_\infty$ invariant.   The automorphism $I_P$  may be included in the commutative diagram 
\begin{equation}\label{diagram400}
\begin{CD}
\ov Y@>{ I_P}>>\ov Y\\
@V \ov q VV @V\ov qVV \\
S  @>{ \TT_P}>> S
\end{CD}.
\end{equation}
In order to describe $I_P$ explicitly, let us consider a holomorphic automorphism
\begin{equation}\label{ip} 
  \tilde{I}_P:V\times \bar{\BC}_z \to V\times \bar{\BC}_z,  \  (v,z)=(v+v_0, z). \end{equation} 
  Clearly, 
  $$\tilde{I}_P^2=g_{\gamma_0}$$
  (recall that $\xi(\gamma_0)=1$).
 For all $\gamma\in \Gamma$
 $$g_{\gamma}\circ  \tilde{I}_P (v,z)=(v+v_0+\gamma, \xi(\gamma)z),$$
 $$ \tilde{I}_P\circ g_{\gamma}(v,z)=(v+\gamma+v_0, \xi(\gamma)z)=(v+v_0+\gamma, \xi(\gamma)z),$$
 i.e., $ \tilde{I}_P$ and $g_{\gamma}$ do commute.  This implies that  $\tilde{I}_P$ descends  to the holomorphic involution of $(V\times \bar{\BC}_z)/\Gamma=\bar{Y}$, and this involution is our $I_P$.

3)    Let $\ov h\in \Aut(\ov Y)$ be the holomorphic involution that  acts as multiplication
 by $-1$ in every fiber of $\LL.$ (In notation of \cite{Zar19} $  \ov h=\mult(-1).$)
In $\bar{Y}=(V\times \bar{\BC}_z)/\Gamma$ map $\ov h $ is induced by the holomorphic involution
\begin{equation}\label{h} 
 \tilde h: V\times \bar{\BC}_z \to V\times \bar{\BC}_z, \  (v,z)=(v, -z),\end{equation} 
 which commutes with all $g_{\gamma}$. Indeed,
   for all $\gamma\in \Gamma$
 $$g_{\gamma}\circ \tilde h(v,z)=(v+\gamma, \xi(\gamma)(-z))=(v+\gamma, -\xi(\gamma)z),$$
 $$\tilde h\circ g_{\gamma}(v,z)=(v+\gamma, -\xi(\gamma)z)=g_{\gamma}\circ\tilde h(v,z).$$

 Let us show that $I_L,$ $I_Y$ and $\ov h$   commute.  It suffices to check that  $\tilde{I}_L$, $\tilde{I}_Y$ and $\tilde h$   commute, which is an immediate corollary of the following direct computations.
 $$\tilde{I}_L\circ\tilde h(v,z)=(v, \frac{1}{-z})=(v, -\frac{1}{z})=\tilde h\circ  \tilde{I}_L(v,z),$$
 $$\tilde{I}_L\circ \tilde{I}_P(v,z)=(v+v_0, \frac{1}{z})=\tilde{I}_P\circ  \tilde{I}_L(v,z),$$
 $$\tilde h\circ \tilde{I}_P(v,z)=(v+v_0, - z)=\tilde{I}_P (v,-z)=\tilde{I}_P\circ\tilde h(v,z).$$

  Let us put now  $$\invl:=I_P\circ I_L: \ov Y\to \ov Y.$$  Then: 
  \begin{itemize}\item[1)] $\invl^2=id;$
  \item[(2)] $ \invl\circ\ov h=\ov h\circ \invl;$
   \item[(3)] $ \ov q\circ \invl=\TT_P\circ\ov q;$
       \item[(4)]  $\TT_P$ has no fixed points, thus $\invl$ has no fixed points; 
  \item[(5)] $\invl (D_0)=D_\infty;$
  \item[(6)] If $d_1,d_2\in D_0$ then $\invl(d_1)\ne d_2.$\end{itemize}
 
Let $X$ be the quotient of $\ov Y$ by the action of the order 2 group $\{id, \invl\}, $ and 
$\pi_Y: \ov Y \to X$ be the corresponding quotient map. Let 
$T$ be  the quotient of $S$  by the action of  the order 2 group $\{id, \TT_P\},$ and $\pi_S:S \to T$ be the corresponding quotient map. 
  Then $X$ and $T$ enjoy the following properties.
\begin{itemize}\item For any $x\in X$ there are precisely two points $y, \invl(y)$ in  $\pi_Y^{-1}(x)$.
\item For any $t\in T$ there are precisely two points $s, \TT_P(s)$ in  $\pi_S^{-1}(t)$.
\item  Both $\pi_Y:\ov Y\to X$   and   $\pi_S:S\to T$   are double unramified  coverings.
\item $X$ is a smooth complex manifold (by   (4)).
\item $T$ is a complex torus with $a(T)=0, \dim(T)=\dim (S)\ge 2$.
\item  It follows from (3) that there is  a holomorphic map  $p:X\to T$  such that the following diagram commutes.
  \begin{equation}\label{diagram500b}
\begin{CD}
\ov Y@>{\pi_Y }>> X\\
@V \ov q VV @VpVV \\
S  @>{\pi_S }>> T
\end{CD}.
\end{equation}
\item   If $\pi_S(s)=t\in T$ then     $p^{-1}(t)\sim\ov  q^{-1}(s)\sim \BP^1$.
\item  It follows from  (2) that there is a holomorphic map (pushdown) $h:X\to  X$  such that the following diagram commutes:
  \begin{equation}\label{diagram500}
\begin{CD}
\ov Y@>{\ov h}>>\ov Y \\
@V\pi_YVV @V\pi_YVV \\
X  @>{h}>> X
\end{CD}.
\end{equation}
\item  Thanks to (5), we have  $\pi_Y(D_0)=\pi_Y(D_\infty):=D;$
\item   Thanks to  (6), the restriction   $p \bigm |_D:D\to T$ is a double covering. \end{itemize}

 It follows that  $X$ is a $\BP^1$-bundle over $T,$  $D$ is a bisection of $p$  and $h$ is a nontrivial automorphism in $\Aut(X)_p$ of order $2$, whose set of fixed points coincides with $D.$
 
\begin{Lemma}\label{nosec} There is no section  of $p.$ \end{Lemma}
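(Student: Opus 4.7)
The plan is to argue by contradiction: assume that a section $\sigma:T\to X$ of $p$ exists, and show that such a $\sigma$ would pull back to a section of $\bar q:\bar Y\to S$ that is incompatible with the action of the involution $\invl$.

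First I would lift $\sigma$ to $\bar Y$. Set $\Sigma:=\sigma(T)\subset X$ and consider $\tilde\Sigma:=\pi_Y^{-1}(\Sigma)\subset\bar Y$. Because $\pi_Y$ is a finite unramified double covering with deck involution $\invl$, the set $\tilde\Sigma$ is a codimension $1$ analytic subset that is $\invl$-invariant. Using the commutative diagram \eqref{diagram500b} together with the fact that $\invl$ permutes the two fibers of $\bar q$ lying above each pair $\{s,\TT_P(s)\}$, I would check that for every $s\in S$ the intersection $\tilde\Sigma\cap\bar q^{-1}(s)$ consists of exactly one point; indeed the image under $\pi_Y$ of this intersection is $\Sigma\cap p^{-1}(\pi_S(s))$, which is a single point since $\sigma$ is a section, and $\pi_Y$ is injective on each fiber of $\bar q$. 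Hence $\tilde\Sigma$ is a (holomorphic) section of $\bar q:\bar Y\to S$.

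Next I would use the structure of $\bar Y=\BP(\LL\oplus\mathbf{1}_S)$ over the torus $S$ of algebraic dimension $0$. As observed in the construction, the only sections of $\bar q$ are the two disjoint sections $D_0$ and $D_\infty$; this is because $\LL$ is nontrivial while $a(S)=0$, so $\Pic^0(S)$-classes have no nonzero sections and no extra section of $\bar q$ can exist. Therefore $\tilde\Sigma\in\{D_0,D_\infty\}$.

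Finally I would derive the contradiction from $\invl$-invariance. Since $I_P$ is fiberwise linear with respect to the decomposition $E=\LL\oplus\mathbf{1}_S$, it preserves both $D_0$ and $D_\infty$; on the other hand $I_L$ was constructed as the involution interchanging $D_0$ and $D_\infty$. Hence $\invl=I_P\circ I_L$ interchanges $D_0$ and $D_\infty$, which can also be verified directly from \eqref{tildeL} and \eqref{ip} on $V\times\bar\BC_z$ (namely, $(v,0)\mapsto(v+v_0,\infty)$). Consequently neither $D_0$ nor $D_\infty$ is $\invl$-invariant, contradicting the fact that $\tilde\Sigma$ must be $\invl$-invariant. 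The main point requiring care is the step showing $\tilde\Sigma$ meets every fiber of $\bar q$ in exactly one point; everything else is routine once this is established.
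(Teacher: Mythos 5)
Your argument is correct and follows essentially the same route as the paper: pull $\Sigma=\sigma(T)$ back to $\pi_Y^{-1}(\Sigma)\subset\ov Y$, check it meets every fiber of $\ov q$ in exactly one point (using that $\invl$ swaps the two fibers over $s$ and $\TT_P(s)$), conclude it must be $D_0$ or $D_\infty$, and derive a contradiction. The only (harmless) difference is the final step: you use that $\pi_Y^{-1}(\Sigma)$ is $\invl$-invariant while $\invl$ interchanges $D_0$ and $D_\infty$, whereas the paper notes that $\pi_Y(D_0)=\pi_Y(D_\infty)=D$ is a bisection, not a section — two equivalent ways to finish.
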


\begin{proof}
 Assume that $p$ has a section $\sigma:T\to X$.
 Let $\Sigma:=\sigma  (T)\subset X$ and $\Delta:=\pi_Y^{-1}(\Sigma).$
As we have already seen, both maps $\pi_S$ and $\pi_Y$ are double unramified covers. 
For every point $t\in T$ there are precisely two distinct points $s$ and $\invl(s)$ in  $\pi_S^{-1}(t)$, and 
there are precisely two distinct points  in $\pi_Y^{-1}(\sigma(t)), $  say, $y_t$ and 
$\invl(y_t).$  One of them is mapped  by  $\ov  q$ to $s,$  another to $\invl(s).$  It follows that for every $s\in S$ there is precisely one point in $\Delta\cap \ov  q^{-1}(s).$  Hence, $\Delta $ is a section of $\ov q.$
 By construction, 
      $\ov q$ has no sections except $D_0$ and $D_{\infty}.$  But  
       $\Delta$ cannot coincide with $D_0$ or  $D_{\infty}$ since $\pi_Y(\Delta)=\Sigma$ is a section of $p$ and $D$ is not.  The contradiction shows that section $\sigma:T\to X$  does not exist. \end{proof}

  Hence, $p$ has no sections and, therefore, there are no automorphisms of type {\bf A} and {\bf B} in $\Aut(X)_p.$

\end{document}